\newcommand{\cal}{\mathcal}
\newcommand{\cN}{\mathcal{N}}
\newcommand{\cM}{\mathcal{M}}
\newcommand{\dist}{{\rm dist}\,}
\newcommand{\rd}{{\mathbb R}^d}
\newcommand{\R}{{\mathbb R}}
\newcommand{\C}{{\mathbb C}}
\newcommand{\N}{{\mathbb N}}
\newcommand{\diam}{{\rm diam}\,}
\newcommand{\Reg}{{\rm Reg}}
\newcommand{\Ha}{{\cal H}}
\newcommand{\var}{\textnormal{var}} 
\renewcommand{\S}{\mathbf{S}}
\renewcommand{\Re}{\mathrm{Re}}
\newcommand{\bd}[1]{\partial #1} 
\newcommand{\eps}{\varepsilon}
\newcommand{\ind}[1]{\mathbf{1}_{#1}}
\newcommand{\mydot}{\,\cdot\,}
\newcommand{\wlim}[1]{\underset{#1}{\rm{wlim}}\,}
\newcommand{\esswlim}[1]{\underset{#1}{\rm{esswlim}}\,}
\newcommand{\esslim}[1]{\underset{#1}{\rm{esslim}}\,}
\newtheorem{thm}{Theorem}
\newtheorem{prop}[thm]{Proposition}
\newtheorem{lem}[thm]{Lemma}
\newtheorem{cor}[thm]{Corollary}
\theoremstyle{definition}
\newtheorem{ex}[thm]{Example}
\newtheorem{rem}[thm]{Remark}
\numberwithin{equation}{section} \numberwithin{thm}{section}
\begin{document}
\title{Curvature bounds for neighborhoods of self-similar sets}
\author{Steffen Winter}
\address{Karlsruhe Institute of Technology, Department of Mathematics, 76133 Karlsruhe, Germany}

\date{\today}
\subjclass[2000]{Primary 28A75, 28A80; Secondary 28A78, 53C65}
\keywords{self-similar set, parallel set, curvature measures, Minkowski content, Minkowski dimension}
\begin{abstract}
In some recent work, fractal curvatures $C^f_k(F)$ and fractal curvature measures $C^f_k(F,\mydot)$, $k= 0,\ldots,$ $d$, have been determined for all self-similar sets $F$ in $\rd$, for which the parallel neighborhoods satisfy a certain regularity condition and a certain rather technical curvature bound. The regularity condition is conjectured to be always satisfied, while the curvature bound has recently been shown to fail in some concrete examples. As a step towards a better understanding of its meaning, we discuss several equivalent formulations of the curvature bound condition and also a very natural technically simpler condition which turns out to be stronger. These reformulations show that the validity this condition does not depend on the choice of the open set and the constant $R$ appearing in the condition and allow to discuss some concrete examples of self-similar sets. In particular, it is shown that the class of sets satisfying the curvature bound condition is strictly larger than the class of sets satisfying the assumption of polyconvexity used in earlier results. 
\end{abstract}
\maketitle

\section{Introduction}

Total curvatures and curvature measures are well known for certain classes of sets in Euclidean space $\R^d$ including convex bodies, differentiable submanifolds with boundary, sets with positive reach and certain unions of such sets. 
In convex geometry, total curvatures are better known as intrinsic volumes or Min\-kow\-ski functionals and in differential geometry as integrals of mean curvatures. Curvature measures were introduced by Federer \cite{F59} for sets with positive reach and have later been extended in various directions, see e.g.\ \cite{CMS84, BK00,Za87, Za90}. 

In some recent work fractal counterparts -- so called \emph{fractal curvatures} and \emph{fractal curvature measures} -- have been introduced for certain classes of self-similar fractals, cf.~\cite{winter, Za09, wz10}, based on the following ideas: A compact (fractal) set $K\subset\R^d$ is well approximated by its $\eps$-parallel sets
$$K_\eps:=\{x\in\rd: \dist(x,K)\le\eps\}\, $$
as $\eps$ tends to 0 (in the sense of Hausdorff metric) and for sufficiently regular sets $K$ the curvature measures behave nicely under such approximation. Also for singular sets $K$, the parallel sets are often regular enough to admit curvatures measures $C_k(K_\eps,\cdot)$. In this case  fractal curvatures are explained as suitably scaled limits of the total curvatures $C_k(K_\eps):= C_k(K_\eps, \R^d)$ and fractal curvature measures as the corresponding weak limits of the curvature measures, as $\eps$ tends to zero. 

The focus of recent work has been to establish the existence of these limits for certain classes of (self-similar) sets. 
In \cite{winter}, where these concepts were introduced, the existence of fractal curvatures and fractal curvature measures 
was established for self-similar sets with polyconvex parallel sets. This polyconvexity assumption has been dropped in \cite{Za09} for the fractal curvatures and in \cite{wz10} for fractal curvature measures. In the former paper, also random self-similar sets are treated. In these papers the polyconvexity is replaced by two technical conditions. One is a regularity condition on the parallel sets, which ensures that the curvature measures of the $\eps$-parallel sets are well defined for almost all $\eps$ (see condition RC below). This condition is certainly weaker than the polyconvexity assumption as it is known to be satisfied for all sets in $\R^d$, $d\le 3$. Moreover, it is conjectured to be always satisfied for self-similar sets satisfying the open set condition, see the discussion below.  
The second condition is a bound on the curvature of $F_\eps$ near certain intersections of the cylinder sets of $F$, cf.~condition CBC below. This curvature bound condition is not very well understood. As it involves cylinder sets of $F$ of all scales, it is rather difficult to verify in concrete examples. But it is believed to be satisfied for most self-similar fractals. Very recently, some self-similar sets for which CBC does not hold have been discovered independently by Andreas Wust and Jan Rataj, giving thus a negative answer to the question whether CBC holds for all self-similar sets, see Example~\ref{ex:wust} below.\\ 

In this note we discuss the curvature bound condition in some greater detail. We will give several equivalent reformulations of this condition. In particular, this will allow to show that the validity of CBC does neither depend on the choice of the open set $O$ (a feasible set for the strong open set condition) nor on the choice of the constant $R$, which appear both in the original formulation of CBC. This removes some arbitrariness from the condition. The condition can not be weakened or strenghtened by making a different choice of $O$ or $R$. Some of the reformulations of CBC are also helpful when discussing examples, as they are easier to verify.
We also discuss a technically much simpler curvature bound which involves only first level cylinder sets. This bound was a natural candidate for an equivalent reformulation of CBC but turned out to be slightly stronger, hence the term strong curvature bound condition (SCBC) used in the sequel. This condition is interesting in practice, as it implies CBC and is much easier to verify. On the other hand, it enlightens to some extend, why some knowledge of the fine structure provided by CBC is necessary. In general, one needs to know something about the intersections of cylinder sets at all scales. For certain 'simple' fractals, knowledge of the first level suffices. Here `simple' roughly means that the intersections of the parallel sets of first level cylinder sets have no `fractal' structure. 
We illustrate the results by verifying CBC for the Koch curve (using SCBC) and for some other set for which SCBC fails.
These two examples are sets, which do not have polyconvex parallel sets but for which CBC holds. They show in particular, that the class of sets covered by the results in \cite{Za09} and \cite{wz10} is strictly larger than the class of sets with polyconvex parallel sets considered in \cite{winter}.

The paper is organized as follows. In the next section, we collect some well known facts about curvature measures required later on. In Section~\ref{sec:fcm}, we recall the curvature bound condition and the results from \cite{Za09} and \cite{wz10} on the existence of fractal curvatures and fractal curvature measures for self-similar sets. Finally, in Sections~\ref{sec:CBC} and~\ref{sec:SCBC}, the main results are presented. Several equivalent reformulations of CBC and their consequences are discussed in Sections~\ref{sec:CBC}, while SCBC is the subject of interest in Section~\ref{sec:SCBC}. In these sections also the examples are found.
 
\section{Curvature measures} \label{sec:cm}
We denote the {\it closure of the complement} of a compact set $K$ by $\widetilde{K}$.
A distance $\eps\ge 0$ is called {\it regular} for the set $K$ if $\widetilde{K_\eps}$ has positive reach in the sense of Federer \cite{F59} and the boundary $\partial K_\eps$ is a Lipschitz manifold. In view of Fu \cite{Fu85}, in space dimensions $d\le 3$ this is fulfilled for Lebesgue almost all $\eps$. (For general $d$, a sufficient condition for this property is that $\eps$ is a regular value of the distance function of $K$ in the sense of Morse theory, cf.~\cite{Fu85}.) For regular $\eps$ the {\it Lipschitz-Killing curvature measures} of order $k$ are determined by means of Federer's versions for sets of positive reach:
\begin{equation}
C_k(K_\eps,\mydot):=(-1)^{d-1-k}C_k(\widetilde{K_\eps},\mydot)\, ,~~k=0,\ldots,d-1\, ,
\end{equation}
where the surface area ($k=d-1$) is included and the volume measure $C_d(K_\eps,\mydot):=\lambda_d(K_\eps\cap\mydot)$ is added for completeness. For more details and some background on singular curvature theory for fractals we refer to \cite{winter, Za09}.\\
The {\it total curvatures} of $K_\eps$ are denoted by
\begin{equation}
C_k(K_\eps):=C_k(K_\eps,\rd)\, ,~~ k=0,\ldots ,d\, .
\end{equation}
We recall now the main properties of curvature measures required for our purposes:
By an associated Gauss-Bonnet theorem the {\it Gauss curvature}
$C_0(K_\eps)$ coincides with the {\it Euler-Poincar\'{e} characteristic} $\chi(K_\eps)$.\\
The curvature measures are {\it motion invariant}, i.e.,
\begin{equation} \label{eqn:motion}
C_k(g(K_\eps),g(\mydot))=C_k(K_\eps,\mydot)~~\mbox{for any Euclidean motion}~g\, ,
\end{equation} 
the $k$-th measure is {\it homogeneous of degree} $k$, i.e.,
\begin{equation} \label{eqn:scale}
C_k(\lambda K_\eps,\lambda \mydot)=\lambda^k\, C_k(K_\eps,\mydot)\, ,~~\lambda>0\, ,
\end{equation}
and they are {\it locally determined}, i.e.,
\begin{equation} \label{eqn:loc}
C_k(K_\eps,\mydot\cap G)=C_k(K'_{\eps '},\mydot\cap G)
\end{equation}
for any open set $G\subset\rd$ such that $K_\eps\cap G=K'_{\eps '}\cap G$, where $K_\eps$ and $K'_{\eps '}$ are
both parallel sets such that the closures of their complements have positive reach.

Finally, for sufficiently large distances the parallel sets are always regular and the curvature measures may be estimated by those of a ball of almost the same size: For any compact set $K\subset\R^d$ and any $\eps\ge R>\sqrt{2}\,\diam K$ we have
\begin{align} \label{eq:R-big}
 C^\var_k(K_\eps)&\le c_k(K,R)\, \eps^k\,,
\end{align}
for some constant $c_k(K,R)$ independent of $\eps$, see \cite[Thm.~4.1]{Za09}). Here $C^\var_k(K_\eps,\mydot)$ denotes the total variation measure of $C_k(K_r,\mydot)$ and $C^\var_k(K_r):=C^\var_k(K_r,\R^d)$ its total mass.\\


\section{Existence of fractal curvatures and fractal curvature measures} \label{sec:fcm}

In this section, we briefly recall the results on fractal curvatures and fractal curvature measures obtained in \cite{Za09, wz10}. For this purpose, we  recall first some concepts related to self-similar sets and give a precise formulation of the regularity condition and the curvature bound condition. 
   
For $N\in\N$ and $i=1,\ldots,N$, let $S_i:\R^d\to\R^d$ be a contracting similarity with contraction ratio $0<r_i<1$.
Let $F\subset\R^d$ be the \emph{self-similar set} generated by the function system $\{S_1,\ldots, S_N\}$. That is, $F$ is the unique nonempty, compact set invariant under the set mapping $\S (\mydot):=\bigcup_i S_i(\mydot)$, cf.~\cite{Hut81}. The set $F$ (or, more precisely, the system $\{S_1,\ldots,S_N\}$) is said to satisfy the \emph{open set
condition} (OSC) if there exists a non-empty, open and bounded
subset $O$ of $\R^d$
 such that
 $$
 \bigcup _i S_i O \subseteq O \quad \text{ and } \quad S_i O \cap
S_j O=\emptyset  \text{ for } i\neq j\,.
$$
The \emph{strong open set condition} (SOSC) holds for $F$ (or $\{S_1,\ldots,S_N\}$), if there exist a set $O$ as in the OSC which additionally satisfies $O\cap F\neq\emptyset$.
It is well known that in $\R^d$ OSC and SOSC are equivalent, cf.~\cite{schief}, i.e., for $F$ satisfying OSC, there exists always such a set $O$ with $O\cap F\neq\emptyset$.

The unique solution $s=D$ of the equation $\sum_{i=1}^N r_i^s=1$ is called the \emph{similarity dimension} of $F$.  It is well known that for self-similar sets $F$ satisfying OSC, $D$ coincides with Minkowski and Hausdorff dimension of $F$.
Further, a self-similar set $F$ is called \emph{arithmetic} (or \emph{lattice}), if there exists some number $h>0$ such
that $-\ln r_i \in h\mathbb{Z}$ for $i=1,\ldots, N$, i.e.\ if $\{-\ln r_1,\ldots,-\ln r_N\}$ generates a discrete subgroup of $\R$.
Otherwise $F$ is called \emph{non-arithmetic} (or \emph{non-lattice}).

Let $\Sigma^*:=\bigcup_{j=0}^\infty\{1,\ldots,N\}^j$ be the set of all finite words over the alphabet $\{1,\ldots,N\}$ including the emtpy word. For $\omega=\omega_1\ldots \omega_n\in\Sigma^*$ we denote by $|\omega|$ the \emph{length of $\omega$} (i.e., $|\omega|=n$) and by $\omega|k:=\omega_1\ldots \omega_k$ the subword of the first $k\le n$ letters. We abbreviate $r_\omega:=r_{\omega_1}\ldots r_{\omega_n}$ and  $S_\omega:=S_{\omega_1}\circ\ldots\circ S_{\omega_n}$. 

Throughout we assume that $F$ is a self-similar set in $\R^d$ satisfying OSC and that $D$ denotes its similarity dimension. Furthermore, we assume that the following regularity condition is satisfied:
\begin{itemize}
\item[(RC)\quad] Almost all $\eps\in(0, \sqrt{2}\diam(F))$ are regular for $F$. 
\end{itemize}
That is, the set $\cM$ of irregular values is a Lebesgue null set. This condition is always satisfied for subsets of $\R^d$, $d\le 3$, cf.\  Section~\ref{sec:cm}. For self-similar sets in $\R^d$ satisfying OSC, it is conjectured to be true for all $d$. Note that there are no irregular values $\eps\ge\sqrt{2}\diam(F)$, cf.\ for instance~\cite[Theorem 4.1]{Za09}.

In order to be able to formulate the curvature bound condition (CBC), we need to fix some constant $R=R(F)$ for $F$ such that
\begin{equation}\label{eq:R}
R>\sqrt{2}\, \diam F
\end{equation}
(to be able to apply \eqref{eq:R-big}) and some open set $O=O(F)$ satisfying SOSC.  Note that the choice of $R$ and $O$ are otherwise completely arbitrary. 
For $0<\eps\le R$, let $\Sigma(\eps)$ be the family of all finite words $\omega=\omega_1\ldots \omega_n\in\Sigma^*$ such that
\begin{equation} \label{Sigma}
	Rr_\omega< \eps \le Rr_{\omega||\omega|-1},
\end{equation}
and let
\begin{equation} \label{Sigma_b}
	\Sigma_b(\eps):=\{\omega\in\Sigma(\eps): (S_\omega F)_\eps\cap (\S O)^c_\eps \neq \emptyset\}.
\end{equation}
The words $\omega $ in $\Sigma(\eps)$ describe those cylinder sets $S_\omega  F$ which are approximately of size $\eps$ and the words in $\Sigma_b(\eps)$ only those which are also $2\eps$-close to the boundary of the set $\S O$, the first iterate of the set $O$ under the set mapping $\S =\bigcup_{i=1}^N S_i$. Note that the family $\{S_\omega F: \omega\in\Sigma(\eps)\}$ is a covering of $F$ for each $\eps$, which is optimal in that none of the sets can be removed. It is an easy consequence of the equation $\sum_{i=1}^N r_i^D=1$ that, for each $\eps\in(0,R]$,
\begin{equation} \label{Sigma-eqn}
	\sum_{\omega\in\Sigma(\eps)} r_\omega^D=1\,.
\end{equation}
In \cite{wz10}, the curvature bound condition is formulated as follows:
\begin{itemize}
\item[(CBC)]   There is a constant $c_k$ such that for almost all $\eps\in(0,R)$ and all $\sigma\in \Sigma_b(\eps)$
\begin{equation*}
C_k^\var\left(F_\eps, \bd (S_\sigma F)_\eps \cap \bd \bigcup_{\sigma'\in\Sigma(\eps)\setminus\{\sigma\}}(S_{\sigma'}F)_\eps\right)\le c_k\eps^k.
\end{equation*}
\end{itemize}

The following result on the limiting behaviour of the total curvatures was obtained in \cite{Za09}. We restrict our attention to the deterministic case.
Set
\begin{equation} \label{Rk-def}
R_k(\eps) := C_k(F_\eps)-\sum_{i=1}^N \ind{(0,r_i]}(\eps) C_k((S_i F)_\eps),\quad \eps>0.
\end{equation}

\begin{thm} \cite[Theorem~2.3.8 and Corollary~2.3.9]{Za09} \label{thm:global}
Let $k\in\{0,1,\ldots,d\}$ and $F$ be a self-similar set in $\R^d$, $d\ge1$, satisfying OSC. If $k\le d-2$, assume additionally that RC and CBC hold.
Then
\begin{equation}\label{eq:global:lim}
C_k^f(F):=\lim_{\delta\to 0} \frac 1{|\ln\delta|}\int_\delta^1\eps^{D-k} C_k(F_\eps)\frac{d\eps}\eps =\frac 1{\eta} \int_0^R r^{D-k-1}R_k(r) dr,
\end{equation}
where $\eta=-\sum_{i=1}^N r_i^D \ln r_i$.
Moreover, if $F$ is non-arithmetic,
then
\begin{equation}\label{eq:global:esslim}
\esslim{\eps\to 0} \eps^{D-k} C_k(F_\eps)=C_k^f(F).
\end{equation}
\end{thm}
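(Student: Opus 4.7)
The plan is to recast the problem as a renewal equation and apply a renewal theorem of Feller type. Using the scaling relation \eqref{eqn:scale} and motion invariance \eqref{eqn:motion}, the identity $(S_i F)_\eps = S_i(F_{\eps/r_i})$ gives $C_k((S_i F)_\eps) = r_i^k C_k(F_{\eps/r_i})$ whenever $\eps \le r_i$, so the definition \eqref{Rk-def} of $R_k$ reads
\begin{equation*}
C_k(F_\eps) = \sum_{i=1}^N \ind{(0, r_i]}(\eps)\, r_i^k\, C_k(F_{\eps/r_i}) + R_k(\eps).
\end{equation*}
Substituting $\eps = e^{-t}$, $T_i := -\ln r_i$, $Z(t) := e^{-(D-k)t} C_k(F_{e^{-t}})$, and $\varphi(t) := e^{-(D-k)t} R_k(e^{-t})$ turns this into the additive renewal equation
\begin{equation*}
Z(t) = \sum_{i=1}^N r_i^D\, Z(t - T_i)\, \ind{\{t \ge T_i\}} + \varphi(t),
\end{equation*}
with probability weights $p_i := r_i^D$ summing to $1$ by the definition of $D$, mean $\eta = -\sum r_i^D \ln r_i$, and lattice character matching the arithmetic/non-arithmetic dichotomy of $F$.

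I would then view the equation as a half-line renewal equation on $[t_0, \infty)$ with $t_0 := -\ln R$, the values of $Z$ on $[t_0 - \max_i T_i, t_0]$ (i.e., on a bounded range of $\eps \ge R$) playing the role of bounded initial data. The central technical step is to verify that $\varphi$ is directly Riemann integrable on $[t_0, \infty)$, i.e., that $\int_0^R r^{D-k-1} |R_k(r)|\, dr < \infty$ with $|\varphi|$ dominated by a d.R.i.\ function. Condition RC makes $R_k(\eps)$ defined for a.e.\ $\eps$. The locality property \eqref{eqn:loc} implies that on the interior of $\bigcup_i S_i O$ the curvatures of $F_\eps$ and of the covering $\bigcup_i (S_i F)_\eps$ agree, so $R_k(\eps)$ is concentrated in the $\eps$-neighbourhood of $\partial(\S O)$. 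Iterating this observation and reorganising via the covering $\{S_\omega F : \omega \in \Sigma(\eps)\}$ of $F$ isolates the contribution from those $\sigma \in \Sigma_b(\eps)$ close to $\partial(\S O)$; each is bounded by $c_k\eps^k$ through CBC, and $\card\Sigma_b(\eps)$ is uniformly controlled using the geometric covering implied by \eqref{Sigma-eqn}. This yields $|R_k(\eps)| \le c\,\eps^k$ for a.e.\ $\eps \in (0, R]$, hence the required integrability.

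With this at hand, the Cesaro form of the renewal theorem yields $T^{-1}\int_{t_0}^{t_0 + T} Z(t)\, dt \to \eta^{-1}\int_0^R r^{D-k-1} R_k(r)\, dr$ as $T \to \infty$, which (after substituting $\eps = e^{-t}$ with $T = |\ln\delta|$, and noting that the contribution from $\eps > R$ is $O(1/T)$ by \eqref{eq:R-big}) is exactly \eqref{eq:global:lim}. In the non-arithmetic case, Feller's key renewal theorem strengthens this to pointwise convergence of $Z(t)$ itself as $t \to \infty$, outside a Lebesgue null set (absorbing the irregular $\eps$ excluded by RC), which gives the essential limit \eqref{eq:global:esslim}. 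The principal obstacle is the bound $|R_k(\eps)| \le c\,\eps^k$ for a.e.\ small $\eps$: this is precisely the content of CBC for $k \le d-2$, which is why CBC is imposed there; for $k \in \{d-1, d\}$ standard surface-area and volume considerations supply the bound directly, explaining its absence from the hypotheses in those cases.
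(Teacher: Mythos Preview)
The present paper does not prove this theorem; it is quoted verbatim from \cite[Theorem~2.3.8 and Corollary~2.3.9]{Za09} (as the citation in the statement indicates) and is recalled in Section~\ref{sec:fcm} only as background for the subsequent discussion of CBC. So there is no ``paper's own proof'' to compare against.

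That said, your outline is an accurate sketch of the method actually used in \cite{Za09} (and, for $k=d$, already in \cite{gatzouras}): one rewrites $\eps^{D-k}C_k(F_\eps)$ as the solution of a renewal equation with forcing $\eps^{D-k}R_k(\eps)$, verifies direct Riemann integrability of the forcing term, and invokes the key renewal theorem in its non-lattice and Ces\`aro versions. Your identification of the estimate $|R_k(\eps)|\le c\,\eps^k$ for a.e.\ $\eps$ as the decisive analytic input, and of CBC (together with a uniform bound on $\#\Sigma_b(\eps)$) as the mechanism delivering it for $k\le d-2$, matches the original argument. One point you glide over: direct Riemann integrability requires more than the $L^1$ bound you derive; in \cite{Za09} one also needs that $\eps\mapsto C_k(F_\eps)$ is continuous at every regular $\eps$, and this is where RC enters more substantively than a mere ``a.e.\ defined'' condition.
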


The numbers $C_k^f(F)$ are refered to as the \emph{fractal curvatures} of the set $F$. Formula \eqref{eq:global:lim} in Theorem~\ref{thm:global} should in particular be understood to imply that the integral on the right hand side exists and thus the fractal curvatures are finite.
For $k=d$, the limits in \eqref{eq:global:lim} and \eqref{eq:global:esslim} specialize to the average Minkowski content and the Minkowski content, respectively, and the result is due to Lapidus and Pomerance \cite{LapPo1}, Falconer \cite{fal93} (for $d=1$) and Gatzouras \cite{gatzouras} (for general $d$). The case $k=d-1$ is treated in \cite{rw09}. In both cases the essential limits can be replaced by limits and  the limits are always positive. Recall that for $d\le 3$ RC is known to be satisfied. For the special case of polyconvex parallel sets, where the conditions RC and CBC are not needed, see \cite{winter}.

It is shown in~\cite{wz10}, that under the hypotheses of Theorem~\ref{thm:global} also \emph{fractal curvature measures} exist.

\begin{thm}\label{thm:local} \cite[Theorem~2.3]{wz10}
Let $k\in\{0,1,\ldots,d\}$ and $F$ be a self-similar set in $\R^d$, $d\ge1$, satisfying OSC. If $k\le d-2$, assume additionally that RC and CBC hold.
Then
\begin{equation}\label{eq:mainthm:wlim}
C_k^f(F,\mydot):=\wlim{\eps\to 0}\frac{1}{|\ln\eps|}\int_\eps^1 \tilde{\eps}^{D-k} C_k(F_{\tilde{\eps}},\mydot)\frac{d\tilde{\eps}}{\tilde{\eps}}=C_k^f(F)\mu_F,
\end{equation}
where
$\mu_F$ is the normalized $D$-dimensional Hausdorff measure on $F$. Moreover, if $F$ is non-arithmetic,
then also
the essential weak limit 
$\esswlim{\eps\to 0}\eps^{D-k} C_k(F_\eps,\mydot)$ 
exists and equals $C_k^f(F,\mydot)$.
\end{thm}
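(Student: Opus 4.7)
The plan is to test the measures against continuous functions and reduce the weak convergence to a renewal argument analogous to the one behind Theorem~\ref{thm:global}. For a bounded continuous $f:\R^d\to\R$, set
$$Z_k^f(\eps):=\int f\, dC_k(F_\eps,\mydot).$$
Using the covering $F_\eps=\bigcup_{i=1}^N (S_iF)_\eps$, the local determination property \eqref{eqn:loc}, motion invariance \eqref{eqn:motion}, and the scaling \eqref{eqn:scale}, I would first derive a localized counterpart of the scalar renewal identity \eqref{Rk-def}:
$$Z_k^f(\eps)=\sum_{i=1}^N \ind{(0,r_i]}(\eps)\, r_i^k\, Z_k^{f\circ S_i}(\eps/r_i)+R_k^f(\eps),$$
where the residual $R_k^f(\eps)$ collects the corrections arising from the intersections $(S_iF)_\eps\cap (S_jF)_\eps$ of the first-level parallel neighborhoods.

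Next I would iterate this identity down the code tree, re-expressing $\eps^{D-k}Z_k^f(\eps)$ as a weighted sum over $\omega\in\Sigma(\eps)$ of rescaled terms $(\eps/r_\omega)^{D-k}Z_k^{f\circ S_\omega}(\eps/r_\omega)$ (each weighted by $r_\omega^D$, which by \eqref{Sigma-eqn} sums to $1$), plus a cumulative boundary residual. At every scale, the accumulated boundary contributions are precisely of the form appearing in CBC, namely curvatures of $F_\eps$ supported on $\bd (S_\sigma F)_\eps\cap\bd\bigcup_{\sigma'\neq\sigma}(S_{\sigma'}F)_\eps$ for $\sigma\in\Sigma_b(\eps)$. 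Combined with \eqref{eq:R-big} in the large-$\eps$ regime and the regularity RC, CBC makes the residual absolutely integrable against $\eps^{D-k-1}d\eps$. Passing to the logarithmic variable $h=-\ln\eps$, Feller's renewal theorem then applies to $h\mapsto e^{-(D-k)h}Z_k^f(e^{-h})$ and yields convergence of the logarithmic Ces\`aro average, for every such $f$, to
$$L(f):=\frac{1}{\eta}\int_0^R r^{D-k-1} R_k^f(r)\, dr,$$
with the stronger essential-limit statement in the non-arithmetic case.

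It remains to identify the limit functional $L$ with $C_k^f(F)\int f\, d\mu_F$. The map $f\mapsto L(f)$ extends to a finite Borel measure on $\R^d$, and for any $f$ vanishing on a neighborhood of $F$, locality together with the fact that the cylinders $S_\omega F$ in $\Sigma(\eps)$ shrink to points forces $\eps^{D-k}Z_k^f(\eps)\to 0$, so $L$ is supported on $F$. Self-similarity of the derivation gives the invariance $L=\sum_{i=1}^N r_i^D\, L\circ S_i^{-1}$, hence by uniqueness of the invariant probability measure on $F$ one has $L=c\,\mu_F$ for some scalar $c$, and testing against $f\equiv 1$ (so that $L(1)$ equals the total-mass limit supplied by Theorem~\ref{thm:global}) forces $c=C_k^f(F)$.

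The main obstacle is controlling the iterated boundary residual: without CBC, the overlap corrections accumulating through the code tree could grow faster than $\eps^{k-D}$ and destroy the integrability needed for the renewal theorem. CBC is exactly the hypothesis that uniformly tames these overlaps at every scale. Once it is in place, the proof runs in parallel to that of Theorem~\ref{thm:global}, with the constant test function replaced by a general continuous $f$ and a final self-similarity argument pinning down the limit measure as $C_k^f(F)\mu_F$.
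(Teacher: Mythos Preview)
The present paper does not prove this theorem; it is quoted from \cite[Theorem~2.3]{wz10} as background, so there is no in-paper proof to compare your sketch against.

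On the substance of your sketch: the ingredients you list (locality, scaling, CBC to tame overlap terms, a self-similarity argument to pin down the limit as a multiple of $\mu_F$) are the right ones, but the renewal step as you phrase it has a genuine gap. The identity
\[
Z_k^f(\eps)=\sum_{i=1}^N \ind{(0,r_i]}(\eps)\, r_i^k\, Z_k^{f\circ S_i}(\eps/r_i)+R_k^f(\eps)
\]
is \emph{not} a renewal equation for a single unknown function: the test function changes from $f$ on the left to $f\circ S_i$ on the right, so after iteration you face an infinite family $\{Z_k^{f\circ S_\omega}\}_{\omega\in\Sigma^*}$ rather than one function, and Feller's renewal theorem does not apply in the form you invoke. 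Your subsequent claim that the renewal theorem yields convergence of the Ces\`aro average of $e^{-(D-k)h}Z_k^f(e^{-h})$ to a definite $L(f)$ therefore needs a different justification. One route (closer to what \cite{wz10} actually does) is to first use CBC to obtain uniform bounds on the total variations $\eps^{D-k}C_k^{\var}(F_\eps)$, deduce tightness of the averaged measures, and then argue---via locality and the fact that cylinders in $\Sigma(\eps)$ shrink---that every weak limit point is supported on $F$ and satisfies the invariance $L=\sum_i r_i^D\, L\circ S_i^{-1}$; uniqueness of the self-similar measure and Theorem~\ref{thm:global} for the total mass then identify the limit. Your final paragraph already contains this identification; what is missing is a correct mechanism replacing the direct appeal to the renewal theorem.
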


\section{Equivalent reformulations of CBC} \label{sec:CBC}

We give some alternative equivalent formulations of CBC with the intension to clarify the meaning of this condition and also to simplify its verification in concrete examples. 

Throughout we assume that $k\in\{0,\ldots,d-2\}$ (since for $k\in\{d-1,d\}$ CBC is not needed) and that $F$ is a self-similar set in $\R^d$ satisfying OSC and RC. 
The first equivalent reformulation of CBC is rather obvious and has been mentioned in \cite[cf.~Remark~2.4]{wz10} already: 
The boundary signs in CBC can be omitted. It paves the road for further reformulations. For $\eps\in(0,R)$ and $\sigma\in\Sigma(\eps)$, let
\begin{equation}\label{eq:A-sigma}
A^{\sigma,\eps}:=\bigcup_{\sigma' \in\Sigma(\eps)\setminus\{\sigma\}}(S_{\sigma'}F)_\eps.
\end{equation}

\begin{prop} \label{CBC1}
The following condition is equivalent to CBC:\vspace{1mm}\\
{\bf (CBC1)} There is a constant $c_k$ and a null set $\cN\subset(0,R)$ such that for all $\eps\in(0,R)\setminus\cN$ and all $\sigma\in \Sigma_b(\eps)$
\begin{equation}\label{eq:equi-cond1}
C_k^\var\left(F_\eps, (S_\sigma F)_\eps \cap A^{\sigma,\eps}\right)\le c_k\eps^k.
\end{equation}
\end{prop}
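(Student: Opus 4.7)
The plan is to prove equivalence by sandwich: one direction is a trivial monotonicity argument, while the other uses two ingredients, namely the concentration of $C_k^\var(F_\eps,\mydot)$ on $\partial F_\eps$ for $k\le d-1$, and the set identity that was already advertised in the commented-out remark of the introduction.

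\textbf{Easy direction.} Since $\partial(S_\sigma F)_\eps\subseteq (S_\sigma F)_\eps$ and $\partial A^{\sigma,\eps}\subseteq A^{\sigma,\eps}$, we have
\[
\partial(S_\sigma F)_\eps\cap\partial A^{\sigma,\eps}\ \subseteq\ (S_\sigma F)_\eps\cap A^{\sigma,\eps}\,,
\]
so by monotonicity of the total variation measure $C_k^\var(F_\eps,\mydot)$, the bound in (CBC1) is at least the bound in (CBC). Hence (CBC1) implies (CBC) with the same constant $c_k$.

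\textbf{Hard direction.} I need the set-theoretic identity
\begin{equation}\label{eq:identity-plan}
(S_\sigma F)_\eps\cap A^{\sigma,\eps}\cap\partial F_\eps \ =\ \partial(S_\sigma F)_\eps\cap\partial A^{\sigma,\eps}\cap\partial F_\eps\,.
\end{equation}
The key input is that the cylinder decomposition commutes with parallel-set formation: since $F=\bigcup_{\sigma'\in\Sigma(\eps)}S_{\sigma'}F$, we have $F_\eps=(S_\sigma F)_\eps\cup A^{\sigma,\eps}$. Let $x$ lie in the left hand side of \eqref{eq:identity-plan}. Being in $\partial F_\eps$, it is approachable by points $x_n\notin F_\eps$, which in particular lie neither in $(S_\sigma F)_\eps$ nor in $A^{\sigma,\eps}$; combined with $x\in(S_\sigma F)_\eps\cap A^{\sigma,\eps}$ this forces $x\in\partial(S_\sigma F)_\eps\cap\partial A^{\sigma,\eps}$. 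The reverse inclusion is immediate.

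\textbf{Finishing.} For regular $\eps$ and $k\le d-1$, Federer's curvature measure $C_k(\widetilde{F_\eps},\mydot)$ (and hence, via the sign convention of Section~\ref{sec:cm}, also $C_k(F_\eps,\mydot)$ together with its total variation) is concentrated on $\partial F_\eps=\partial\widetilde{F_\eps}$. Consequently, for any Borel set $B$,
\[
C_k^\var(F_\eps,B)=C_k^\var(F_\eps,B\cap\partial F_\eps)\,.
\]
Applying this to $B=(S_\sigma F)_\eps\cap A^{\sigma,\eps}$ and using \eqref{eq:identity-plan} yields
\[
C_k^\var\bigl(F_\eps,(S_\sigma F)_\eps\cap A^{\sigma,\eps}\bigr)
=C_k^\var\bigl(F_\eps,\partial(S_\sigma F)_\eps\cap\partial A^{\sigma,\eps}\bigr)\,,
\]
so the two quantities appearing in (CBC) and (CBC1) are literally identical, and the equivalence holds with the same $c_k$ and the same exceptional null set $\cN$. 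The only possible obstacle is the boundary-concentration statement for $C_k^\var$; this is a standard consequence of the local determination property \eqref{eqn:loc} and the fact that on sets of positive reach the Federer curvature measures of order $k\le d-1$ are carried by the boundary. No deeper property of self-similar sets is needed here.
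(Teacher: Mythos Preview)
Your proof is correct and follows exactly the same route as the paper's own proof: the set identity \eqref{eq:identity-plan} combined with the fact that $C_k(F_\eps,\mydot)$ (hence its variation) is supported on $\partial F_\eps$. The paper merely states these two ingredients and refers to \cite[Remark~2.4]{wz10}; you have supplied the details. One minor remark: your ``easy direction'' via monotonicity is redundant, since in the ``finishing'' step you actually establish that the two quantities in CBC and CBC1 are \emph{equal}, which gives both implications at once.
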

\begin{proof}
The assertion follows from the set equality 
$$
(S_\sigma F)_\eps\cap A^{\sigma,\eps}\cap \partial F_\eps = \partial (S_\sigma F)_\eps\cap \partial A^{\sigma,\eps}\cap \partial F_\eps
$$
and the fact that the curvature measure $C_k(F_\eps,\mydot)$ is concentrated on the boundary of $F_\eps$, see also \cite[Remark 2.4]{wz10}.
\end{proof}

\begin{rem} \label{rem:cN}
Without loss of generality, we can assume that the set $\cN$ in CBC1 has the following additional properties:
\begin{equation} \label{cN-props}
\cM\subseteq\cN \quad \text{ and } \quad r_\sigma\cN\subset\cN \text{ for all }\sigma\in\Sigma^*,
\end{equation}
where $\cM$ is the (Lebesgue null) set of exceptions in RC.
Indeed, the existence of a null set $\cN$ satisfying these additional conditions clearly implies the existence of a null set at all satisfying CBC1. Conversely, if CBC1 holds with an arbitrary null set $\cN$ of exceptions, then it also holds with the larger null set $\cN^*:=\bigcup_{\sigma\in\Sigma^*} r_\sigma(\cM\cup\cN)\subset(0,R)$ of exceptions, which has both of the above properties. In the sequel we will always assume that the set $\cN$ of exceptions has these two additional properties.
\end{rem}

For the proof of the next reformulation we require the following estimate, which is proved in \cite{wz10}. Recall the definition of the set $A^{\sigma,\eps}$ from \eqref{eq:A-sigma}.
\begin{lem}\cite[Lemma~3.2]{wz10} \label{curv-est}
Let $k\in\{0,\ldots,d-2\}$ and let $F$ be a self-similar set in $\R^d$ satisfying OSC, RC and CBC.
Then there is a constant $c>0$ such that, for all $\eps\in(0,R)\setminus\cN$ 
and all $\sigma\in\Sigma(\eps)$,
\begin{align}\label{eqn:curvB2}
C_k^\var(F_\eps,(S_\sigma F)_\eps\cap A^{\sigma,\eps})&\le c \eps^k\,.
\end{align}
\end{lem}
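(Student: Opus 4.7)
The plan is to upgrade the bound, which Proposition~\ref{CBC1} provides (via the equivalence of CBC and CBC1) only for $\sigma\in\Sigma_b(\eps)$, to all $\sigma\in\Sigma(\eps)$ by invoking the self-similarity of $F$ together with the scaling, motion-invariance and locality properties of the curvature measures. First, split $\Sigma(\eps)$ as $\Sigma_b(\eps)\sqcup(\Sigma(\eps)\setminus\Sigma_b(\eps))$; on the first piece the bound \eqref{eqn:curvB2} is just CBC1 with $c:=c_k$, so the real task is treating the interior words.

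For $\sigma\in\Sigma(\eps)\setminus\Sigma_b(\eps)$, the neighborhood $(S_\sigma F)_\eps$ is tucked well inside $\S O$, at distance exceeding $\eps$ from $(\S O)^c$. The plan is to descend the self-similar structure, stripping the leading letter $\sigma_1$ of $\sigma$ to pass to the shorter word $\sigma':=\sigma_2\cdots\sigma_n$ and the rescaled parameter $\eps':=\eps/r_{\sigma_1}$. At each such descent step, locality \eqref{eqn:loc} is used to replace $F_\eps$ by the first-level cylinder $(S_{\sigma_1}F)_\eps=S_{\sigma_1}(F_{\eps'})$ on a neighborhood of $(S_\sigma F)_\eps$; this substitution is legitimate because the interior condition isolates $(S_\sigma F)_\eps$ from all other $(S_jF)_\eps$, $j\neq\sigma_1$. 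The motion invariance \eqref{eqn:motion} and homogeneity \eqref{eqn:scale} of the curvature measures then yield
\begin{equation*}
 C_k^\var\bigl(F_\eps,(S_\sigma F)_\eps\cap A^{\sigma,\eps}\bigr)=r_{\sigma_1}^k\, C_k^\var\bigl(F_{\eps'},(S_{\sigma'}F)_{\eps'}\cap A^{\sigma',\eps'}\bigr).
\end{equation*}
Iterating this has two possible outcomes: either the rescaled word $\sigma^{(m)}=\sigma_{m+1}\cdots\sigma_n$ eventually lands in $\Sigma_b(\eps^{(m)})$ at some $m<n$, in which case Step~1 applies and the accumulated $r^k$ factors telescope with $c_k(\eps^{(m)})^k$ to produce $c_k\eps^k$; or the stripping exhausts $\sigma$ at a terminal scale $\eps^{(n)}=\eps/r_\sigma>R$, where the a~priori bound \eqref{eq:R-big} delivers $c_k(F,R)(\eps^{(n)})^k$, scaling back to $c_k(F,R)\eps^k$. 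The closure property $r_\sigma\cN\subset\cN$ from Remark~\ref{rem:cN} is precisely what keeps every rescaled value in the admissible set $(0,R)\setminus\cN$ along the descent.

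The main obstacle is the locality substitution. The hypothesis $(S_\sigma F)_\eps\cap(\S O)^c_\eps=\emptyset$ gives $\eps$-separation from the boundary of $\S O$, but to apply \eqref{eqn:loc} and replace $F_\eps$ by $(S_{\sigma_1}F)_\eps$ one needs $\eps$-separation from every other first-level cylinder $(S_jF)_\eps$, $j\neq\sigma_1$. In typical OSC configurations the boundaries of the $S_iO$ adjoin each other precisely along $\bd\S O$, so the two separations are equivalent and the substitution is clean; in general, however, this equivalence must be derived carefully, perhaps by descending through the nested family of open sets $\S_\omega O$ along the prefixes of $\sigma$. Once this geometric point is settled, the remaining bookkeeping—the telescoping of the $r_{\sigma_1\cdots\sigma_m}^k$ factors and the application of either CBC1 or \eqref{eq:R-big} at the terminal level—is routine.
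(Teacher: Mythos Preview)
The paper does not supply its own proof of this lemma; it is quoted verbatim from \cite[Lemma~3.2]{wz10}. Your descent strategy---strip the leading letter of $\sigma$, rescale, and iterate until the shortened word lands in $\Sigma_b$---is exactly the argument used there, so the overall plan is sound.

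The obstacle you flag is genuine, but it is not hard to close and you should not leave it open. From $\sigma\notin\Sigma_b(\eps)$ one has $(S_\sigma F)_\eps\cap(\S O)^c=\emptyset$, so $(S_\sigma F)_\eps\subset\S O$. Since $\sigma\in\Sigma(\eps)$ gives $\eps>Rr_\sigma>\diam(S_\sigma F)$, the set $(S_\sigma F)_\eps$ is connected; as $\S O=\bigcup_i S_iO$ is a \emph{disjoint} union of open sets, $(S_\sigma F)_\eps$ lies entirely in one piece, necessarily $S_{\sigma_1}O$ (SOSC gives $S_\sigma F\cap S_\sigma O\neq\emptyset$, hence $S_\sigma F\cap S_{\sigma_1}O\neq\emptyset$). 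Now suppose $x\in(S_\sigma F)_\eps\cap(S_jF)_\eps$ for some $j\neq\sigma_1$ and pick $y\in S_jF$ with $|x-y|\le\eps$. Then $y\in\overline{S_jO}\subset(S_{\sigma_1}O)^c$, so the segment $[x,y]$ cannot stay inside $\S O$: a connected subset of the disjoint union $\bigcup_i S_iO$ lies in a single $S_iO$, yet $x\in S_{\sigma_1}O$ and $y\notin S_{\sigma_1}O$. Hence some $z\in[x,y]$ lies in $(\S O)^c$, giving $\dist(x,(\S O)^c)\le|x-z|\le\eps$ and contradicting $\sigma\notin\Sigma_b(\eps)$. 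Thus $(S_\sigma F)_\eps$ is disjoint from every $(S_jF)_\eps$ with $j\neq\sigma_1$, and both the locality substitution $F_\eps\rightsquigarrow(S_{\sigma_1}F)_\eps$ and the identification $(S_\sigma F)_\eps\cap A^{\sigma,\eps}=S_{\sigma_1}\bigl((S_{\sigma'}F)_{\eps'}\cap A^{\sigma',\eps'}\bigr)$ go through cleanly.

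One small correction to your terminal case: if the descent reaches a word $\sigma^{(m)}$ of length one that is still not in $\Sigma_b(\eps^{(m)})$, the disjointness just established forces $(S_{\sigma^{(m)}}F)_{\eps^{(m)}}\cap A^{\sigma^{(m)},\eps^{(m)}}=\emptyset$, so the curvature there vanishes outright---no appeal to \eqref{eq:R-big} is needed.
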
 

In the following reformulation of CBC we shift the parameter $r$ in the families $\Sigma(r)$ in order to be able to work with larger cylinder sets compared to the parallel width $\eps$. Condition CBC2 below roughly means that one can work with cylinder sets of diameter $\lambda \eps$, $\lambda\ge 1$.  Practically, this allows to reduce the number of mutual intersections between the cylinder sets. It also enables us to show that the validity of CBC for a given self-similar set $F$ does not depend on the choice of the constant $R$.

\begin{thm} \label{thm:CBC2}
Let $k\in\{0,\ldots,d-2\}$ and let $F$ be a self-similar set in $\R^d$ satisfying OSC and RC. Let $\lambda\ge 1$.
Then the following condition is equivalent to CBC:\vspace{1mm}\\ 
{\bf (CBC2)} 
There exist $b_k=b_k(\lambda)>0$ and a null set $\cN$ such that for all $\eps\in(0,R/\lambda)\setminus\cN$ and all $\omega\in \Sigma_b(\lambda\eps)$
\begin{equation*}
C_k^\var\left(F_\eps, (S_\omega F)_\eps \cap \bigcup_{\omega'\in\Sigma (\lambda\eps)\setminus\{\omega\}}(S_{\omega'}F)_\eps\right)\le b_k\eps^k\,,
\end{equation*}
and such that for all $\eps\in[R/\lambda,R)\setminus\cN$ 
\begin{equation*}
C_k^\var\left(F_\eps\right)\le b_k\eps^k.
\end{equation*}
\end{thm}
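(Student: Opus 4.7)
The plan is to reduce, via Proposition~\ref{CBC1}, to the equivalence CBC1 $\Leftrightarrow$ CBC2. The key combinatorial observation is a tree-type decomposition of $\Sigma(\eps)$: for each $\omega\in\Sigma(\lambda\eps)$, setting
$$\Sigma_\omega(\eps):=\{\omega\tau:\tau\in\Sigma(\eps/r_\omega)\},$$
the family $\{\Sigma_\omega(\eps):\omega\in\Sigma(\lambda\eps)\}$ partitions $\Sigma(\eps)$. Consequently $(S_\omega F)_\eps=\bigcup_{\sigma\in\Sigma_\omega(\eps)}(S_\sigma F)_\eps$ and $\bigcup_{\omega'\in\Sigma(\lambda\eps)\setminus\{\omega\}}(S_{\omega'}F)_\eps=\bigcup_{\sigma'\in\Sigma(\eps)\setminus\Sigma_\omega(\eps)}(S_{\sigma'}F)_\eps$. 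Combined with the standard inequalities $\lambda\eps r_{\min}/R\le r_\omega<\lambda\eps/R$ for $\omega\in\Sigma(\lambda\eps)$ and $\sum_\tau r_\tau^D=1$, this yields the uniform cardinality bounds $|\Sigma_\omega(\eps)|=|\Sigma(\eps/r_\omega)|\le(\lambda/r_{\min})^D$, and similarly $|\Sigma(\eps)|\le(\lambda/r_{\min})^D$ for $\eps\in[R/\lambda,R]$.

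For CBC $\Rightarrow$ CBC2, fix $\eps\in(0,R/\lambda)\setminus\cN$ and $\omega\in\Sigma_b(\lambda\eps)$. For every $\sigma\in\Sigma_\omega(\eps)$ the inclusion $\bigcup_{\omega'\neq\omega}(S_{\omega'}F)_\eps\subset A^{\sigma,\eps}$ together with Lemma~\ref{curv-est} gives
$$C_k^\var\Bigl(F_\eps,(S_\sigma F)_\eps\cap\bigcup_{\omega'\neq\omega}(S_{\omega'}F)_\eps\Bigr)\le c\,\eps^k,$$
and summing over $\sigma\in\Sigma_\omega(\eps)$ delivers the first CBC2 bound with $b_k:=c(\lambda/r_{\min})^D$. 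For $\eps\in[R/\lambda,R)\setminus\cN$, I decompose $F_\eps=\bigcup_{\sigma\in\Sigma(\eps)}(S_\sigma F)_\eps$ and on each piece split into the intersection with $A^{\sigma,\eps}$ (bounded by $c\eps^k$ via Lemma~\ref{curv-est}) and its complement (on which locality identifies $F_\eps$ with $(S_\sigma F)_\eps$; scaling plus \eqref{eq:R-big}, applicable since $\eps/r_\sigma>R$, yields a bound $c_k(F,R)\eps^k$). The cardinality bound on $\Sigma(\eps)$ closes this case.

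For CBC2 $\Rightarrow$ CBC1, fix $\tilde\eps\in(0,R)\setminus\cN$ and $\sigma\in\Sigma_b(\tilde\eps)$. When $\tilde\eps\ge R/\lambda$, the second CBC2 bound delivers the required inequality at once. Otherwise, let $\omega$ be the unique prefix of $\sigma$ in $\Sigma(\lambda\tilde\eps)$; the inclusions $(S_\sigma F)_{\tilde\eps}\subset(S_\omega F)_{\lambda\tilde\eps}$ and $(\S O)^c_{\tilde\eps}\subset(\S O)^c_{\lambda\tilde\eps}$ show $\omega\in\Sigma_b(\lambda\tilde\eps)$. Writing $B:=\bigcup_{\omega'\in\Sigma(\lambda\tilde\eps)\setminus\{\omega\}}(S_{\omega'}F)_{\tilde\eps}$, I split
$$(S_\sigma F)_{\tilde\eps}\cap A^{\sigma,\tilde\eps}\subset\bigl((S_\omega F)_{\tilde\eps}\cap B\bigr)\cup\bigl((S_\sigma F)_{\tilde\eps}\cap B^c\bigr).$$
The $B$-piece is controlled directly by CBC2, giving $b_k\tilde\eps^k$. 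For the $B^c$-piece, the identity $F=\bigcup_{\omega'\in\Sigma(\lambda\tilde\eps)}S_{\omega'}F$ forces $F_{\tilde\eps}$ and $(S_\omega F)_{\tilde\eps}$ to agree on the open set $B^c$, so locality and the scaling identity $(S_\omega F)_{\tilde\eps}=S_\omega(F_\delta)$ give
$$C_k^\var(F_{\tilde\eps},(S_\sigma F)_{\tilde\eps}\cap B^c)\le C_k^\var((S_\omega F)_{\tilde\eps})=r_\omega^k\,C_k^\var(F_\delta)$$
with $\delta:=\tilde\eps/r_\omega\in(R/\lambda,R/(\lambda r_{\min})]$. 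Finally $C_k^\var(F_\delta)\le C\delta^k$, with $C$ supplied by CBC2 when $\delta<R$ and by \eqref{eq:R-big} when $\delta\ge R$, which gives the $B^c$-piece $\le C\tilde\eps^k$.

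The main technical point will be the null-set bookkeeping. The scaling and locality steps require $\tilde\eps/r_\omega\notin\cM\cup\cN$ and regularity of both $F_{\tilde\eps}$ and $(S_\omega F)_{\tilde\eps}$; this is arranged by enlarging the null set of exceptions to be stable under multiplication by $r_\sigma$ for every $\sigma\in\Sigma^*$, as in Remark~\ref{rem:cN}, and then invoking contrapositives to propagate regularity to every intermediate scale. Once this is in place, the resulting constants depend only on $\lambda$, $F$ and $R$, as required by the statement.
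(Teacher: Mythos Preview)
Your proof is correct and follows essentially the same route as the paper's: the refinement of $\Sigma(\eps)$ into the blocks $\Sigma_\omega(\eps)$ for $\omega\in\Sigma(\lambda\eps)$ with a uniform cardinality bound, the inclusion $\bigcup_{\omega'\neq\omega}(S_{\omega'}F)_\eps\subset A^{\sigma,\eps}$ together with Lemma~\ref{curv-est} for CBC $\Rightarrow$ CBC2, and the $B$/$B^c$ split combined with locality and scaling for the converse are exactly the paper's ingredients. Your explicit treatment of the case $\delta=\tilde\eps/r_\omega\ge R$ (invoking \eqref{eq:R-big} rather than the second CBC2 inequality) is in fact slightly more careful than the paper's own wording at that step.
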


\begin{proof} 
For $\lambda=1$, CBC1 and CBC2 are obviously equivalent, since the first inequality in CBC2 reduces to CBC1 in this case and the range of the second one is the empty set. (The second inequality should be viewed as an extension of the range of \eqref{eq:R-big} to the interval $[R/\lambda,R)$.)

So fix some $\lambda>1$.  
We first show that CBC1 implies CBC2. For $\omega=\omega_1\ldots \omega_m\in \Sigma (\lambda\eps)$, let
$$
\Sigma^\omega(\eps):=\{\sigma\in\Sigma(\eps): \sigma_i=\omega_i \text{ for } i=1,\ldots,m\}.
$$
Observe that the cardinality of the sets $\Sigma^\omega(\eps)$ is bounded by a constant (independent of $\eps\in(0,R)$ and $\omega\in\Sigma (\lambda\eps)$).
Indeed, each $\sigma\in\Sigma^\omega(\eps)$ is of the form $\sigma=\omega \tilde{\sigma}$ with $\tilde{\sigma}\in\Sigma(\eps/r_\omega)$. Hence
\begin{equation*}
\#\Sigma^\omega(\eps)\le\#\Sigma(\eps/r_\omega)\le \#\Sigma(R\lambda^{-1}) =:\hat{c}\,,
\end{equation*}
where the last inequality is due to the relation $\eps/r_\omega>\lambda^{-1}R$ (since $\omega\in\Sigma(\lambda\eps)$) and the monotonicity of $\#\Sigma(\mydot)$.
Since $(S_\omega F)_\eps=\bigcup_{\sigma\in\Sigma^\omega(\eps)} (S_\sigma F)_\eps$, we have for each $\eps\in(0,\lambda^{-1}R)\setminus\cN$,
\begin{align*}
C_k^\var &\left(F_\eps, (S_\omega F)_\eps \cap \bigcup_{\omega'\in\Sigma (\lambda\eps)\setminus\{\omega\}}(S_{\omega'}F)_\eps\right)\\
&= C_k^\var\left(F_\eps, \bigcup_{\sigma\in\Sigma^\omega(\eps)} (S_\sigma F)_\eps \cap \bigcup_{\omega'\in\Sigma (\lambda\eps)\setminus\{\omega\}}(S_{\omega'}F)_\eps\right)\\
&\le \sum_{\sigma\in\Sigma^\omega(\eps)} C_k^\var\left(F_\eps, (S_\sigma F)_\eps \cap \bigcup_{\omega'\in\Sigma (\lambda\eps)\setminus\{\omega\}} 
(S_{\omega'}F)_\eps \right)\\
&\le \sum_{\sigma\in\Sigma^\omega(\eps)} C_k^\var\left(F_\eps, (S_\sigma F)_\eps \cap \bigcup_{\sigma'\in\Sigma(\eps)\setminus\{\sigma\}} (S_{\sigma'}F)_\eps\right)\\
&= \sum_{\sigma\in\Sigma^\omega(\eps)} C_k^\var\left(F_\eps, (S_\sigma F)_\eps \cap A^{\sigma,\eps}\right),
\end{align*}
where the last inequality is due to the set inclusion
$$\bigcup_{\omega'\in\Sigma (\lambda\eps)\setminus\{\omega\}}(S_{\omega'}F)_\eps\subseteq \bigcup_{\sigma'\in\Sigma(\eps)\setminus\{\sigma\}} (S_{\sigma'}F)_\eps
$$
and the last equality to \eqref{eq:A-sigma}.
Now, since CBC1 is assumed to hold (which is equivalent to CBC by Proposition~\ref{CBC1}), we can apply Lemma~\ref{curv-est} and obtain
that each of the terms in this sum is bounded from above by $c\eps^k$. Therefore, the whole sum is bounded by $b_k\eps^k$ with $b_k:=\hat{c} c$, showing the first inequality of CBC2. The second inequality follows immediatly from \cite[Corollary 4.1]{wz10}, which states that CBC implies the uniform boundedness of $\eps\mapsto C_k^{\var}(F_\eps)$ on compact intervals $[a,b]\subset(0,\infty)$. 
For the convenience of the reader, we provide the following direct alternative proof of the second inequality: 
Observe that for $\eps\in(0,R)\setminus\cN$
\begin{align*}
C_k^\var (F_\eps)&= C_k^\var\left(F_\eps, \bigcup_{\sigma\in\Sigma(\eps)} (S_\sigma F)_\eps\right)\\
&\le \sum_{\sigma\in\Sigma(\eps)} C_k^\var\left(F_\eps, (S_\sigma F)_\eps \right)\\
&\le \sum_{\sigma\in\Sigma(\eps)} C_k^\var\left(F_\eps, (S_\sigma F)_\eps \cap A^{\sigma,\eps}\right) + C_k^\var\left(F_\eps, (S_\sigma F)_\eps \cap (A^{\sigma,\eps})^c\right).
\end{align*}
By Lemma~\ref{curv-est}, for each $\sigma\in\Sigma(\eps)$, the first term in this sum is bounded by $c\eps^k$. For the second term, we have $F_\eps \cap (A^{\sigma,\eps})^c=(S_\sigma F)_\eps \cap (A^{\sigma,\eps})^c$ and so, by the locality property \eqref{eqn:loc}, 
\begin{align*}
C_k^\var\left(F_\eps, (S_\sigma F)_\eps \cap (A^{\sigma,\eps})^c\right)&=C_k^\var\left((S_\sigma F)_\eps, (S_\sigma F)_\eps \cap (A^{\sigma,\eps})^c\right)\\
&\le C_k^\var\left((S_\sigma F)_\eps\right)=r_\sigma^k C_k^\var\left(F_{\eps/r_\sigma} \right).
 \end{align*}
Since $\sigma\in\Sigma(\eps)$ implies $\frac\eps{r_\sigma}>R$, the last term is bounded by $r_\sigma^k c(F,R) \left(\frac\eps{r_\sigma}\right)^k=c(F,R)\eps^k$, by \eqref{eq:R-big}.
Finally observe that, for $\eps\in[\lambda^{-1}R,R)$, the cardinality of the family $\Sigma(\eps)$ is bounded by the constant $\tilde c:= \#\Sigma(\lambda^{-1}R)$ and thus we conclude that $C_k^\var (F_\eps)$ is bounded by $b_k\eps^k$ (with $b_k=\tilde c(c+c(F,R))$) for $\eps\in[\lambda^{-1}R,R)$ as claimed in the second inequality in CBC2. This completes the proof of the implication CBC1 $\Rightarrow$ CBC2.

For the reverse implication, let first $\eps\in[\lambda^{-1}R,R)\setminus\cN$. Then, by the second inequality in CBC2, we immediatly obtain for each $\sigma\in\Sigma(\eps)$,
\begin{align*}
C_k^\var\left(F_\eps, (S_\sigma F)_\eps \cap A^{\sigma,\eps}\right)&\le C_k^\var\left(F_\eps\right)\le b_k\eps^k\,,
\end{align*}
which verifies the inequality in CBC1 for $\eps\in[\lambda^{-1}R,R)\setminus\cN$. 
Now let $\eps\in(0,\lambda^{-1}R)\setminus\cN$ and $\sigma\in \Sigma(\eps)$. Let $\omega\in\Sigma(\lambda\eps)$ be the unique sequence such that
$\sigma=\omega\tilde\sigma$. In analogy with \eqref{eq:A-sigma}, we set
\begin{equation} \label{eqn:B-omega}
B^{\omega,\eps}:=\bigcup_{\omega'\in\Sigma (\lambda\eps)\setminus\{\omega\}}(S_{\omega'}F)_\eps\,.
\end{equation}
Since $(S_\sigma F)_\eps\subseteq (S_\omega F)_\eps$ and obviously 
$
A^{\sigma,\eps}
\subseteq \R^d = B^{\omega,\eps}\cup (B^{\omega,\eps})^c$ 
we infer, that
\begin{align} \label{eqn:CBC2to1}
C_k^\var\left(F_\eps, (S_\sigma F)_\eps \cap A^{\sigma,\eps}\right)
&\le C_k^\var\left(F_\eps, (S_\omega F)_\eps \cap B^{\omega,\eps}\right)
+ C_k^\var\left(F_\eps, (S_\omega F)_\eps \cap (B^{\omega,\eps})^c\right) 
\end{align}
Now, if we assume $\sigma\in\Sigma_b(\eps)$, then $\omega\in\Sigma_b(\lambda \eps)$. Therefore, by CBC2, the first term in the above expression is bounded by $b_k\eps^k$.
For the second term observe that, by the locality property \eqref{eqn:loc} (since $\eps$ and thus $\eps/r_\omega$ in $(0,R)\setminus\tilde\cN$) in the open set $(B^{\omega,\eps})^c$, we can replace $F_\eps$ by $(S_\omega F)_\eps$. Hence this term is bounded by
\begin{align*}
C_k^\var\left((S_\omega F)_\eps, (S_\omega F)_\eps \cap (B^{\omega,\eps})^c\right)
&\le r_\omega^k C_k^\var\left(F_{\eps/r_\omega}\right)\,.
\end{align*}
Finally, recalling that $w\in\Sigma(\lambda\eps)$ and so $\eps/r_\omega> \lambda^{-1}R$, we conclude from the second inequality in CBC2 that the last expression (and thus the second term in \eqref{eqn:CBC2to1}) is bounded by $b_k \eps^k$. This verifies the inequality in CBC1 for $\eps\in(0,\lambda^{-1}R)\setminus\cN$ and $\sigma\in\Sigma_b(\eps)$ and completes the proof of the implication CBC2 $\Rightarrow$ CBC1.  
\end{proof}

Note that condition CBC2 in Theorem~\ref{thm:CBC2} can equivalently be phrased ``There exists a constant $\lambda\ge 1$, a constant $b_k=b_k(\lambda)$ and \ldots,'' or  ``For all $\lambda\ge 1$, there exists a constant $b_k=b_k(\lambda)$ and \ldots''.  The next statement shows that it is not important how the constant $R$ is chosen. If for a self-similar set, CBC fails to hold  for some $R$, it can not be verified by chosing a different $R$. 

\begin{cor}
CBC is independent of the choice of the constant $R$, i.e., if $R_1$ and $R_2$ are two constants  with $R_i>\sqrt{2}\diam F$, 
then CBC with $R=R_1$ is satisfied if and only if CBC with $R=R_2$ is.
\end{cor}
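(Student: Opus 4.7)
The plan is as follows. Without loss of generality assume $R_1<R_2$, and set $\lambda:=R_2/R_1>1$. For clarity I temporarily denote the families $\Sigma(\eps)$, $\Sigma_b(\eps)$ and the sets $A^{\sigma,\eps}$ corresponding to $R=R_i$ by $\Sigma^{R_i}(\eps)$, $\Sigma_b^{R_i}(\eps)$ and $A^{\sigma,\eps,R_i}$. Since the defining inequality \eqref{Sigma} depends only on the ratio $\eps/R$, one obtains the scaling identity $\Sigma^{R_1}(\eps)=\Sigma^{R_2}(\lambda\eps)$, and monotonicity of parallel sets in the radius additionally yields $\Sigma_b^{R_1}(\eps)\subseteq\Sigma_b^{R_2}(\lambda\eps)$.

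For the direction CBC$(R_2)\Rightarrow$CBC$(R_1)$, I would apply Theorem~\ref{thm:CBC2} to CBC$(R_2)$ with this $\lambda$. Its first inequality is then valid on $\eps\in(0,R_2/\lambda)=(0,R_1)$, and by the scaling identity the set $\bigcup_{\omega'\in\Sigma^{R_2}(\lambda\eps)\setminus\{\omega\}}(S_{\omega'}F)_\eps$ is precisely $A^{\omega,\eps,R_1}$. Since any $\omega\in\Sigma_b^{R_1}(\eps)$ automatically lies in $\Sigma_b^{R_2}(\lambda\eps)$, the CBC2 bound immediately yields the CBC1$(R_1)$ bound, whence CBC$(R_1)$ by Proposition~\ref{CBC1}.

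For the reverse direction CBC$(R_1)\Rightarrow$CBC$(R_2)$, I would verify CBC1$(R_2)$ in two ranges. On $\eps\in[R_1,R_2)$ the estimate \eqref{eq:R-big} applied with the constant $R_1>\sqrt{2}\diam F$ bounds $C_k^\var(F_\eps)$, and hence its restriction to any subset, by $c_k(F,R_1)\eps^k$. On $\eps\in(0,R_1)$, given $\tau\in\Sigma_b^{R_2}(\eps)$, I would select the unique prefix $\sigma$ of $\tau$ with $\sigma\in\Sigma^{R_1}(\eps)$, which exists because the product $r_{\tau|j}$ decreases strictly from $r_\emptyset=1\ge\eps/R_1$ down to $r_\tau<\eps/R_2<\eps/R_1$. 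Using $(S_\tau F)_\eps\subseteq(S_\sigma F)_\eps$, I would dominate $C_k^\var(F_\eps,(S_\tau F)_\eps\cap A^{\tau,\eps,R_2})$ by $C_k^\var(F_\eps,(S_\sigma F)_\eps\cap A^{\sigma,\eps,R_1})+C_k^\var(F_\eps,(S_\sigma F)_\eps\cap(A^{\sigma,\eps,R_1})^c)$, and handle the two summands exactly as in the second half of the proof of Theorem~\ref{thm:CBC2}: Lemma~\ref{curv-est} (this is where the hypothesis CBC$(R_1)$ enters) controls the first, while the locality property \eqref{eqn:loc} combined with the scaling \eqref{eqn:scale} reduces the second to $r_\sigma^k C_k^\var(F_{\eps/r_\sigma})$, which is in turn bounded by $c_k(F,R_1)\eps^k$ via \eqref{eq:R-big}, since $\sigma\in\Sigma^{R_1}(\eps)$ forces $\eps/r_\sigma>R_1$.

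The main obstacle is the asymmetry between the two directions: CBC$(R_2)\Rightarrow$CBC$(R_1)$ is essentially a change of variables in Theorem~\ref{thm:CBC2}, whereas CBC$(R_1)\Rightarrow$CBC$(R_2)$ requires an independent \emph{zoom-in by prefixes} to cover the enlarged range $\eps\in(0,R_2)$, which $\Sigma^{R_1}(\eps)$ no longer reaches once $\eps\ge R_1$. A minor technical point is to merge the various exceptional null sets (from CBC$(R_1)$, from RC, and their images under multiplication by the $r_\sigma$) into a single Lebesgue null set on $(0,R_2)$, which by Remark~\ref{rem:cN} is harmless.
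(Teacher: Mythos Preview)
Your proof is correct and follows essentially the same approach as the paper: both exploit the scaling identity $\Sigma^{R_{\text{small}}}(\eps)=\Sigma^{R_{\text{large}}}(\lambda\eps)$ with $\lambda=R_{\text{large}}/R_{\text{small}}$, and both handle the ``large $\Rightarrow$ small'' direction by applying Theorem~\ref{thm:CBC2} to the larger $R$ and reading off CBC1 for the smaller $R$ from its first inequality. The only organisational difference is in the ``small $\Rightarrow$ large'' direction: the paper first recognises CBC1 for the smaller $R$ (together with \eqref{eq:R-big}) as CBC2 for the larger $R$ and then invokes Theorem~\ref{thm:CBC2} backwards, whereas you verify CBC1 for the larger $R$ directly by the prefix argument, Lemma~\ref{curv-est}, locality and \eqref{eq:R-big} --- which amounts to inlining the CBC2\,$\Rightarrow$\,CBC1 half of the proof of Theorem~\ref{thm:CBC2}. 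The ingredients and estimates are identical; your route merely avoids naming the intermediate CBC2 condition.
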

\begin{proof}
Without loss of generality, we may assume that $R_1>R_2$. Suppose CBC1 holds with $R=R_1$ and let $\lambda:=\frac{R_1}{R_2}>1$. Then, by Theorem~\ref{thm:CBC2}, CBC2 holds with $R=R_1$ and $\lambda=\frac{R_1}{R_2}$. Since $\frac{R_1}{\lambda}=R_2$ and $\Sigma^{(R_1)}(\lambda\eps)=\Sigma^{(R_2)}(\eps)$ (where the superscripts $R_1$ and $R_2$ indicate which $R$ we have to use in the definition of $\Sigma(r)$), we have in particular that for all $\eps\in(0,R_2)\setminus\cN$ and for all $w\in\Sigma_b^{(R_2)}(\eps)$ 
\begin{equation*}
C_k^\var\left(F_\eps, (S_\omega F)_\eps \cap \bigcup_{\omega'\in\Sigma^{(R_2)}(\lambda\eps)\setminus\{\omega\}}(S_{\omega'}F)_\eps\right)\le b_k\eps^k\,.
\end{equation*}
which is just CBC1 with $R=R_2$.

Conversely, if CBC1 with $R=R_2$ holds, then the argument from above shows that the first inequality of CBC2 with $R=R_1$ and $\lambda=\frac{R_1}{R_2}$ also holds. Moreover, by \eqref{eq:R-big}, there exists a constant $c=c(F,R_2)$ such that $C_k^\var(F_\eps)\le c\eps^k$ for $\eps>R_2=\frac{R_1}{\lambda}$, i.e., in particular, for $\eps\in(\frac{R_1}{\lambda}, R_1]$. Hence, the second inequality of CBC2 with $R=R_1$ is also satisfied. Now, again by Theorem~\ref{thm:CBC2}, we infer that CBC1 with $R=R_1$ holds, which completes the proof.   
\end{proof}

Condition CBC3 below shows that if the cylinder sets are chosen large enough (compared to $\eps$), then one can pass over to mutual intersections of pairs of cylinder sets. The proof is based on a lemma in \cite{winter}, which roughly says that a set $(S_\sigma F)_\eps$ from a family $\{(S_\omega F)_\eps:\omega\in\Sigma(\lambda\eps)\}$ does not intersect too many of the other members of this family, provided $\lambda$ is large enough, cf.~\cite[Lemma 5.3.1]{winter}. More precisely, $\lambda$ needs to be larger than $R\rho^{-1}$, where $\rho$ is given as follows: Because of SOSC, there exists a word $u\in\Sigma^*$ such that $S_uF\subset O$ and the compactness of $S_u F$ implies that there is a constant $\alpha>0$ such that each point $x\in S_uF$ has a distance greater than $\alpha$ to $\partial O$, i.e., $ d(x,O^c)>\alpha$. Set $\rho:=r_{\min}\frac\alpha 2$, where $r_{\min}:=\min\{r_i: 1\le i\le N\}$. (Note that $\rho$ depends on the choice of $O$ and the word $u$. Any choice $\rho\le r_{\min}\frac\alpha 2$ is also fine.) Compare also \cite[Section 5.1]{winter}. 





\begin{thm} \label{thm:CBC3}
Let $k\in\{0,\ldots,d-2\}$ and let $F$ be a self-similar set in $\R^d$ satisfying OSC and RC. Let $\lambda\ge \max\{1, R\rho^{-1}\}$. Then the following condition is equivalent to CBC:\vspace{1mm}\\
{\bf (CBC3)} There is a constant $a_k=a_k(\lambda)$ and a null set $\cN$ such that for all $\eps\in(0,R/\lambda)\setminus\cN$, $\omega\in\Sigma_b(\lambda \eps)$ and $\omega'\in \Sigma (\lambda\eps)\setminus\{\omega\}$
\begin{equation*}
C_k^\var\left(F_\eps, (S_\omega F)_\eps \cap (S_{\omega'}F)_\eps\right)\le a_k\eps^k
\end{equation*}
and such that for all $\eps\in[R/\lambda,R)\setminus\cN$ 
\begin{equation*}
C_k^\var\left(F_\eps\right)\le a_k\eps^k.
\end{equation*}
\end{thm}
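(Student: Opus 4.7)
The plan is to prove the equivalence by routing through condition CBC2 from Theorem~\ref{thm:CBC2} (with the same constant $\lambda$), exploiting the fact that under the hypothesis $\lambda\ge R\rho^{-1}$ the covering $\{(S_\omega F)_\eps : \omega\in\Sigma(\lambda\eps)\}$ has uniformly bounded valence. Concretely, \cite[Lemma 5.3.1]{winter} provides a constant $M=M(\lambda,F)$ such that for all $\eps\in(0,R/\lambda)$ and all $\omega\in\Sigma(\lambda\eps)$,
$$
\#\{\omega'\in\Sigma(\lambda\eps)\setminus\{\omega\} : (S_\omega F)_\eps\cap(S_{\omega'}F)_\eps\neq\emptyset\}\le M.
$$
This bounded-overlap property is the only new ingredient beyond Theorem~\ref{thm:CBC2}; the rest amounts to bookkeeping with the variation measure.

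The easy direction is CBC $\Rightarrow$ CBC3. By Theorem~\ref{thm:CBC2}, CBC implies CBC2 with the chosen $\lambda$, some constant $b_k$ and null set $\cN$. The second inequality of CBC3 is identical to that of CBC2, so it is inherited for free. For the first, fix admissible $\eps$, $\omega\in\Sigma_b(\lambda\eps)$ and $\omega'\in\Sigma(\lambda\eps)\setminus\{\omega\}$. Since $(S_\omega F)_\eps\cap(S_{\omega'}F)_\eps$ is contained in $(S_\omega F)_\eps\cap\bigcup_{\omega''\neq\omega}(S_{\omega''}F)_\eps$, monotonicity of the nonnegative measure $C_k^\var(F_\eps,\mydot)$ and the first inequality of CBC2 give the bound $b_k\eps^k$, and one sets $a_k:=b_k$.

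For the reverse direction CBC3 $\Rightarrow$ CBC, it suffices to verify CBC2 and then invoke Theorem~\ref{thm:CBC2}. The second inequality is again automatic. For the first, fix $\eps\in(0,R/\lambda)\setminus\cN$ and $\omega\in\Sigma_b(\lambda\eps)$, and let $\Omega(\omega,\eps)\subseteq\Sigma(\lambda\eps)\setminus\{\omega\}$ denote the subset of $\omega'$ with $(S_\omega F)_\eps\cap(S_{\omega'}F)_\eps\neq\emptyset$. Writing
$$
(S_\omega F)_\eps\cap\bigcup_{\omega'\in\Sigma(\lambda\eps)\setminus\{\omega\}}(S_{\omega'}F)_\eps = \bigcup_{\omega'\in\Omega(\omega,\eps)}\bigl((S_\omega F)_\eps\cap(S_{\omega'}F)_\eps\bigr),
$$
subadditivity of $C_k^\var$, the bound $\#\Omega(\omega,\eps)\le M$, and CBC3 yield a bound of $Ma_k\eps^k$, verifying CBC2 with $b_k:=Ma_k$. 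The main obstacle is locating the precise statement of \cite[Lemma 5.3.1]{winter} and checking that the threshold $\lambda\ge R\rho^{-1}$ is exactly what forces the bounded-valence property; beyond that the argument relies only on monotonicity and subadditivity of the variation measure.
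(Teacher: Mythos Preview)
Your proof is correct and follows essentially the same approach as the paper: reduce to the equivalence of CBC2 and CBC3 via Theorem~\ref{thm:CBC2}, obtain CBC2 $\Rightarrow$ CBC3 by monotonicity of the variation measure, and CBC3 $\Rightarrow$ CBC2 by subadditivity together with the bounded-valence lemma \cite[Lemma~5.3.1]{winter} (the paper calls the bound $\Gamma_{\max}$ rather than $M$). The paper likewise remarks that the hypothesis $\lambda\ge R\rho^{-1}$ is exactly what is needed to invoke that lemma.
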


\begin{proof}
Fix some $\lambda\ge \max\{1, R\rho^{-1}\}$. In view of Theorem~\ref{thm:CBC2}, it suffices to show that CBC3 is equivalent to CBC2 (with the same $\lambda$ and $\cN$).
The implication CBC2 $\Rightarrow$ CBC3 is easy: If $\eps\in(0,R)\setminus\cN$, $\omega\in\Sigma_b(\lambda\eps)$ and $\omega'\in\Sigma(\lambda\eps)$, then
$$
C_k^\var\left(F_\eps, (S_\omega F)_\eps \cap (S_{\omega'}F)_\eps\right)\le C_k^\var\left(F_\eps, (S_\omega F)_\eps \cap \bigcup_{v\in\Sigma (\lambda\eps)\setminus\{\omega\}}(S_{v}F)_\eps\right)
$$
and, by CBC2, the right hand side is bounded by $b_k\eps^k$, verifying the first inequality of CBC3. The second inequalities are obviously equivalent in both conditions.  

To show that CBC3 implies CBC2, let $\eps\in(0,R)\setminus\cN$ and $\omega\in\Sigma_b(\lambda\eps)$. Using the notation $B^\omega(\eps)$ from \eqref{eqn:B-omega}, we observe that
\begin{align}
 C_k^\var\left(F_\eps, (S_\omega F)_\eps \cap B^{\omega,\eps}\right) &\le \sum_{\omega'\in\Sigma (\lambda\eps)\setminus\{\omega\}} C_k^\var\left(F_\eps, (S_\omega F)_\eps \cap (S_{\omega'} F)_\eps\right)\,.
\end{align}
We can restrict the summation to those $\omega'$ for which the intersection $(S_\omega F)_\eps\cap (S_{\omega'} F)_\eps$ is nonempty. By \cite[Lemma~5.3.1, p.45]{winter}, the number of such terms is bounded by some constant $\Gamma_{\max}$ (independent of $\eps$ or $\omega$). (Note that this is where the assumption $\lambda>R\rho^{-1}$ is used.)
Since CBC3 is assumed to hold, each term in this sum is bounded by $a_k\eps^k$, giving the upper bound
$\Gamma_{\max} a_k \eps^k$ for the whole sum. This completes the proof. 
\end{proof}

The following statement establishes that the families $\Sigma_b(\cdot)$, which occur in conditions CBC1 -- CBC3 above, can equivalently be replaced by the larger families $\Sigma(\cdot)$. 

\begin{thm} \label{thm:CBC'}
Each of the following conditions is equivalent to CBC:\vspace{1mm}\\ 
{\bf (CBC1')} There is a constant $c_k$ and a null set $\cN\subset(0,R)$ such that for all $\eps\in(0,R)\setminus\cN$ and all $\sigma\in \Sigma(\eps)$
\begin{equation}\label{eq:CBC1'}
C_k^\var\left(F_\eps, (S_\sigma F)_\eps \cap \bigcup_{\sigma'\in\Sigma (\eps)\setminus\{\sigma\}}(S_{\sigma'}F)_\eps\right)\le c_k\eps^k.
\end{equation}
{\bf (CBC2')} 
There exist $\lambda\ge 1$, $b_k=b_k(\lambda)>0$ and a null set $\cN$ such that for all $\eps\in(0,R/\lambda)\setminus\cN$ and all $\omega\in \Sigma(\lambda\eps)$
\begin{equation*}
C_k^\var\left(F_\eps, (S_\omega F)_\eps \cap \bigcup_{\omega'\in\Sigma (\lambda\eps)\setminus\{\omega\}}(S_{\omega'}F)_\eps\right)\le b_k\eps^k
\end{equation*}
and such that for all $\eps\in[R/\lambda,R)\setminus\cN$ 
\begin{equation*}
C_k^\var\left(F_\eps\right)\le b_k\eps^k.
\end{equation*}
{\bf (CBC3')} There exist $\lambda\ge\max\{1, R\rho^{-1}\}$, $a_k=a_k(\lambda)>0$ and a null set $\cN$ such that for all $\eps\in(0,R/\lambda)\setminus\cN$ and $\omega\omega'\in\Sigma(\lambda \eps)$ with $\omega\neq\omega'$
\begin{equation*}
C_k^\var\left(F_\eps, (S_\omega F)_\eps \cap (S_{\omega'}F)_\eps\right)\le a_k\eps^k
\end{equation*}
and such that for all $\eps\in[R/\lambda,R)\setminus\cN$ 
\begin{equation*}
C_k^\var\left(F_\eps\right)\le a_k\eps^k.
\end{equation*}
\end{thm}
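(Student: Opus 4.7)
The plan is to reduce everything to the equivalences already proved in Theorems~\ref{thm:CBC2} and~\ref{thm:CBC3} together with the unrestricted curvature estimate of Lemma~\ref{curv-est}. The direction CBC$i'\Rightarrow$CBC ($i=1,2,3$) is trivial, since $\Sigma_b(\eps)\subseteq\Sigma(\eps)$ forces each primed condition to imply its unprimed counterpart, whence CBC follows from the cited theorems. For the converse I would treat the three primed conditions in turn.

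For CBC$\Rightarrow$CBC1' I would simply invoke Lemma~\ref{curv-est}: under CBC it provides the estimate $C_k^\var(F_\eps,(S_\sigma F)_\eps\cap A^{\sigma,\eps})\le c\eps^k$ for \emph{every} $\sigma\in\Sigma(\eps)$, with no restriction to the boundary family $\Sigma_b(\eps)$. Combined with Proposition~\ref{CBC1} (which removes the boundary signs), this is literally CBC1'.

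For CBC$\Rightarrow$CBC2' I would replay verbatim the first half of the proof of Theorem~\ref{thm:CBC2}, observing that the restriction $\omega\in\Sigma_b(\lambda\eps)$ is not actually used there. Indeed, for an arbitrary $\omega\in\Sigma(\lambda\eps)$ and $\eps\in(0,R/\lambda)\setminus\cN$, the decomposition $(S_\omega F)_\eps=\bigcup_{\sigma\in\Sigma^\omega(\eps)}(S_\sigma F)_\eps$, subadditivity of $C_k^\var(F_\eps,\mydot)$, the set inclusion $B^{\omega,\eps}\subseteq A^{\sigma,\eps}$ valid for every $\sigma\in\Sigma^\omega(\eps)$, the uniform cardinality bound $\#\Sigma^\omega(\eps)\le\hat c$, and the (unrestricted) Lemma~\ref{curv-est} yield
\[
C_k^\var(F_\eps,(S_\omega F)_\eps\cap B^{\omega,\eps})\le \sum_{\sigma\in\Sigma^\omega(\eps)} C_k^\var(F_\eps,(S_\sigma F)_\eps\cap A^{\sigma,\eps})\le \hat c\, c\,\eps^k,
\]
which is the first inequality of CBC2' with $b_k:=\hat c\,c$. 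The second, global inequality of CBC2' is identical to the corresponding one in CBC2 and its proof carries over verbatim. Finally, CBC$\Rightarrow$CBC3' is essentially free once CBC2' is in hand: for $\omega\neq\omega'$ in $\Sigma(\lambda\eps)$ one has $(S_{\omega'}F)_\eps\subseteq B^{\omega,\eps}$, hence
\[
C_k^\var(F_\eps,(S_\omega F)_\eps\cap(S_{\omega'}F)_\eps)\le C_k^\var(F_\eps,(S_\omega F)_\eps\cap B^{\omega,\eps})\le b_k\eps^k,
\]
and the global bound transfers unchanged. Note that the hypothesis $\lambda\ge R\rho^{-1}$ plays no role in this direction; it was needed in Theorem~\ref{thm:CBC3} only to \emph{reverse} the implication, via Lemma~5.3.1 of \cite{winter}.

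I do not anticipate a real obstacle here: the entire content of the theorem is the observation that Lemma~\ref{curv-est} already quantifies the curvature contribution of every cylinder in $\Sigma(\eps)$, boundary or interior, so the restriction to $\Sigma_b$ in CBC1--CBC3 is cosmetic and can be dropped by simply rerunning the existing proofs of Theorems~\ref{thm:CBC2} and~\ref{thm:CBC3} without that constraint.
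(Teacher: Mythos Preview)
Your proposal is correct and follows essentially the same route as the paper: both rely on the observation that the proof of CBC1 $\Rightarrow$ CBC2 in Theorem~\ref{thm:CBC2} never uses the restriction $\omega\in\Sigma_b(\lambda\eps)$, and that Lemma~\ref{curv-est} already delivers the bound for all $\sigma\in\Sigma(\eps)$. The only organizational difference is that the paper arranges the implications into cycles (CBC1 $\Rightarrow$ CBC2$'$ $\Rightarrow$ CBC1$'$ $\Rightarrow$ CBC1, etc.), whereas you derive CBC $\Rightarrow$ CBC1$'$ directly from Lemma~\ref{curv-est}; your shortcut is slightly more economical but otherwise the arguments coincide.
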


\begin{proof}
The implications CBC1' $\Rightarrow$ CBC1, CBC2' $\Rightarrow$ CBC2 and CBC3' $\Rightarrow$ CBC3 are obvious. The implication CBC1 $\Rightarrow$ CBC2'
holds, since in the proof of CBC1 $\Rightarrow$ CBC2 in Theorem~\ref{thm:CBC2} it is only used that $\omega\in\Sigma(\lambda\eps)$ but not that $\omega\in\Sigma_b(\lambda\eps)$. The proofs of the implications
CBC2' $\Rightarrow$ CBC1' and CBC2' $\Rightarrow$ CBC3' are completely analogous to the proofs of  CBC2 $\Rightarrow$ CBC1 and CBC2 $\Rightarrow$ CBC3 in Theorems~\ref{thm:CBC2} and \ref{thm:CBC3}, respectively. One can replace each instance of $\Sigma_b(\cdot)$ by $\Sigma(\cdot)$ and use the ''stronger`` condition CBC2' instead of CBC2. Thus we have the following cycles of implications: CBC1 $\Rightarrow$ CBC2' $\Rightarrow$ CBC1'$\Rightarrow$ CBC1 and CBC1 $\Rightarrow$ CBC2' $\Rightarrow$ CBC3' $\Rightarrow$ CBC3, which, together with the equivalences in Theorems~\ref{thm:CBC2} and \ref{thm:CBC3}, show the equivalence to CBC of each of these three conditions.
\end{proof}

\begin{cor}
The validity of CBC is independent of the choice of the open set $O$.  
\end{cor}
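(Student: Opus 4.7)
The plan is to reduce the corollary to Theorem~\ref{thm:CBC'}. The key observation is that the reformulation CBC1' of CBC is manifestly intrinsic to the self-similar set $F$: the constant $R$ aside, CBC1' refers only to the families $\Sigma(\eps)$, which are determined purely by the contraction ratios $r_1,\ldots,r_N$, to the cylinder sets $S_\sigma F$, and to the parallel set $F_\eps$. The open set $O$ does not appear anywhere in CBC1', in contrast to the original formulation CBC, where $O$ enters through the definition of $\Sigma_b(\eps)$ in \eqref{Sigma_b}.

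Given this, the argument is essentially a one-liner. Let $O_1$ and $O_2$ be two open sets, both feasible for SOSC for $F$, and suppose CBC holds with $O=O_1$. By the direction CBC $\Rightarrow$ CBC1' of Theorem~\ref{thm:CBC'}, the condition CBC1' is satisfied; but CBC1' is a property of $F$ alone and makes no reference to the choice of open set. Applying the reverse direction CBC1' $\Rightarrow$ CBC of Theorem~\ref{thm:CBC'} now with $O=O_2$ (noting that the proof of that implication never used any particular feature of the open set behind the family $\Sigma_b(\cdot)$, since $\Sigma_b(\cdot)\subseteq\Sigma(\cdot)$ is all that enters), we conclude that CBC holds with $O=O_2$. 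Swapping the roles of $O_1$ and $O_2$ yields the equivalence in both directions.

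I do not anticipate any real obstacle: the entire content has already been packaged into Theorem~\ref{thm:CBC'}, whose formulation deliberately avoids $O$ by replacing $\Sigma_b$ with $\Sigma$. The only minor point to double-check while writing the proof is that the exceptional null set $\cN$ in CBC1' can indeed be chosen independently of $O$ (which is clear from Remark~\ref{rem:cN}, since the construction $\cN^*=\bigcup_{\sigma\in\Sigma^*} r_\sigma(\cM\cup\cN)$ there does not involve $O$ either). Thus the corollary is immediate from Theorem~\ref{thm:CBC'}.
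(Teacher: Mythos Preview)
Your proposal is correct and follows exactly the same approach as the paper: the paper's proof is the single sentence ``By Theorem~\ref{thm:CBC'}, CBC is equivalent to CBC1', a condition in which the open set $O$ does not occur.'' Your additional remarks about the null set $\cN$ and the implication CBC1' $\Rightarrow$ CBC for a different $O$ are valid clarifications, but the paper is content to leave these implicit.
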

\begin{proof} 
By Theorem~\ref{thm:CBC'}, CBC is equivalent to CBC1', a condition in which the open set $O$ does not occur.
\end{proof}

We point out that in concrete examples some of these conditions are easier to verify than the original condition. However, we postpone examples to the next section, where a simpler but slightly stronger condition is discussed which is even easier to verify. 

To complete the picture of the present state of the art regarding the curvature bound condition, we briefly discuss an example of a self-similar set not satisfying CBC. It was discovered independently by Andreas Wust and Jan Rataj.  In fact, in the example below we discuss an one-parameter family of sets $F(p)$, $p\in(0,\frac 12)$, for which CBC fails. In the proof we use one of the equivalent reformulations of CBC.   

\begin{figure}
\begin{minipage}{65mm}  
  \includegraphics[width=65mm]{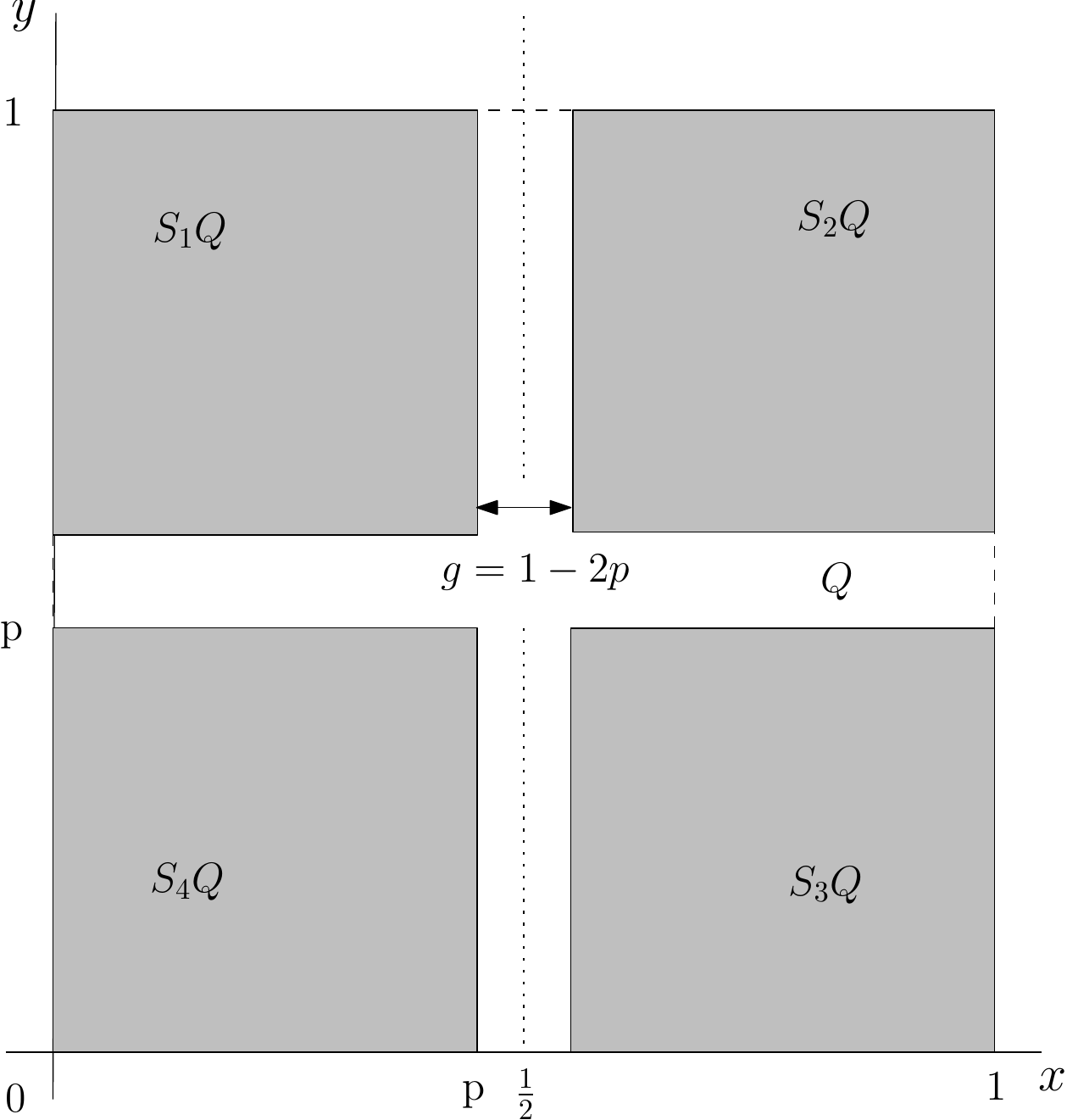}
 \end{minipage}
\begin{minipage}{50mm}
  \includegraphics[width=50mm]{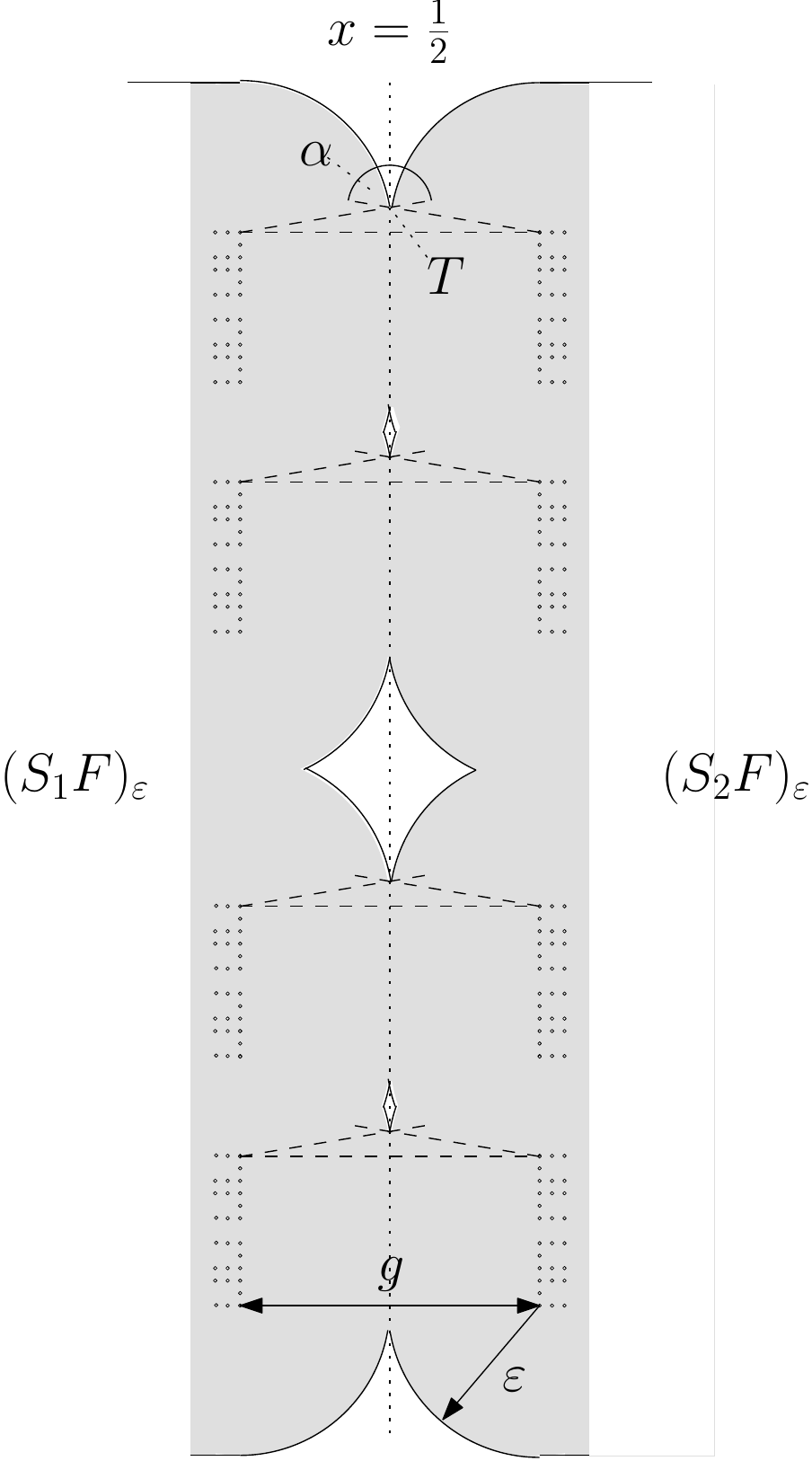}
\end{minipage} \label{fig:wust}
\caption{(Left): the iterates of the unit square $Q=[0,1]^2$ under the IFS generating the set $F(p)$ of Example~\ref{ex:wust}; (Right): enlargement of the intersection $(S_1 F)_\eps\cap (S_2 F)_\eps$ for some $\eps>\frac g2$}
\end{figure}

\begin{ex} \label{ex:wust}
For $p\in(0,\frac 12)$, let $F=F(p)$ be the self-similar set in $\R^2$ generated by the four similarities $S_1,\ldots, S_4$ each with contraction ratio $p$, which map the unit square $Q=[0,1]^2$ to the four squares of side length $p$ in the corners of $Q$, cf.~Figure~\ref{fig:wust}.
$F$ is a Cantor set satisfying the strong separation condition (and thus in particular SOSC). $F$ can also be viewed as the Cartesian product $C\times C$, where $C=C(p)$ is the self-similar Cantor set on $\R$ generated by the two mappings $f_1(x)=px$ and $f_2(x)=px+(1-p)$, $x\in \R$.
It is clear that $\frac g2$ is a critical value of the distance function of $F$, where $g:=1-2p$ is the minimal distance between $S_1 F$ and $S_2 F$. 
Note that for $\eps<\frac g2$, the intersection $(S_1 F)_\eps\cap (S_2 F)_\eps$ is empty, while for $\eps=\frac g2$ it is a Cantor set $\widetilde C$ on the vertical line $x=\frac 12$, which is similar to $C$ (but shrinked by a factor $\frac 13$). For $\eps>\frac g2$, the intersection consists of a finite number of (roughly lense-shaped) connected components whose number increases as $\eps\searrow\frac g2$. The fact, that the number of these components is unbounded as $\eps\searrow\frac g2$, is essentially the reason, why CBC fails.

To provide a rigorous argument, we will now demonstrate that CBC2' (and thus, by Theorem~\ref{thm:CBC'}, CBC) is not satisfied for $F$.  Choose $R$ and $\lambda$ such that $\frac R\lambda = \frac g{2p}$, $R>\sqrt{2}\diam(F)=2$ and $\lambda\ge 1$. (This can for instance be achieved as follows: For $p\ge\frac 18$, choose $R=3$ and $\lambda=R \frac{2p}{1-2p}\ge 1$. For $p<\frac 18$, choose $R=\frac{1-2p}{2p}>3$ and $\lambda=1$.) Let $\omega=1$ and $\omega'=2$. These choices ensure that $\omega,\omega'\in\Sigma(\lambda\eps)$ for each $\eps\in(p\frac R \lambda, \frac R \lambda]=(\frac g2, \frac g{2p}]$.
The validity of CBC2' would in particular imply the existence of a constant $b_0$ and a null set $\cN$ such that for all $\eps\in(p \frac R\lambda,\frac R\lambda)\setminus \cN$  
$$
C_0^\var(F_\eps, (S_1 F)_\eps\cap (S_2 F)_\eps)\le C_0^\var(F_\eps, (S_1 F)_\eps\cap \bigcup_{i\neq 1}(S_i F)_\eps)\le b_0.
$$
Therefore, it suffices to 
show that for each constant $b>0$ there is a set $I=I(b)\subset (p R/\lambda, R/\lambda]$ with $\lambda_1(I)>0$ such that 
for all $\eps\in I$,
$$
C_0^\var(F_\eps, (S_1 F)_\eps\cap (S_2 F)_\eps)\ge b.
$$
Observe that the number $N(\eps)$ of connected components of the set $(S_1 F)_\eps\cap (S_2 F)_\eps$ is given by one plus the number of those complementary intervals $L$ of the set $\widetilde C$, whose length $l$ satisfies $l^2>4\eps^2 - g^2$, cf.\ Figure~\ref{fig:wust}. Each component $K$ of $(S_1 F)_\eps\cap (S_2 F)_\eps$ has exactly 2 points in common with the set $\partial F_\eps$, namely the endpoints of the segment $K\cap \{x=\frac 12\}$. Moreover, by symmetry, the curvature $C_0(F_\eps, \mydot)$ at each of these points is the same (for fixed $\eps$) and given by the angle $\alpha=\alpha(\eps)$ at the point $T$ in Figure~\ref{fig:wust}. 
It is not difficult to see that $\alpha= 2 \arcsin(\frac g{2\eps})$, which implies that $\alpha>\frac g{\eps}$, since $\arcsin(x)>x$ for $x\in(0,1)$.  
Hence we obtain, for $\eps\in (\frac g2, \frac g{2p}]$, that $\alpha>2p$ and thus
$$
C_0^\var(F_\eps, (S_1 F)_\eps\cap (S_2 F)_\eps)= 2 N(\eps) C_0^\var(F_\eps, \{Q\})=2 N(\eps) \frac\alpha {2\pi}>\frac{2p}\pi N(\eps)
$$
So fix some $b>0$. Choose $u>\frac g2$ such that $N(u)\frac{2p}\pi>b$. (This choice is possible, since $N(\eps)\to\infty$ as $\eps\searrow \frac g2$). Let $I:=(\frac g2,u)$. Clearly, $\lambda_1(I)>0$ and, since $N(\mydot)$ is monotone decreasing, we have for all $\eps\in I$, $C_0^\var(F_\eps, (S_1 F)_\eps\cap (S_2 F)_\eps)>\frac{2p}\pi N(\eps)\ge \frac{2p}\pi N(u)>b$ as desired. This shows that CBC fails for each of the sets $F=F(p)$ with $p\in(0,\frac 12)$.
\end{ex}

\section{A simpler but stronger condition} \label{sec:SCBC}

In view of the results in \cite{lw04} and \cite{winter}, it is a natural question to ask, whether the curvature bound condition can also be formulated 
in terms of intersections of first level cylinder sets. Indeed, even formula \eqref{eq:global:lim} in Theorem~\ref{thm:global} suggests this, since the function $R_k$ defined in \eqref{Rk-def} describes essentially the curvature (of $F_\eps$) in the intersections of first level cylinder sets. 
However, it turns out that the condition below  which involves only first level cylinder sets is sufficient but not necessary for CBC to be satisfied. We call this simpler condition the \emph{strong curvature bound condition} (SCBC). It provides a useful tool for the discussion of concrete examples.

\begin{thm} \label{thm:SCBC}
Let $k\in\{0,\ldots,d-2\}$ and let $F$ be a self-similar set in $\R^d$ satisfying OSC and RC. Then the following condition implies CBC:\\
{\bf (SCBC)} There is a constant $d_k$ and a null set $\cN$ such that for all $\eps\in(0,R)\setminus\cN$ and all $i,j\in\{1,\ldots,N\}$ with $i\neq j$, 
\begin{equation*}
C_k^\var\left(F_\eps, (S_i F)_\eps \cap (S_j F)_\eps\right)\le d_k\eps^k.
\end{equation*}
\end{thm}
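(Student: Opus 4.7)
The plan is to establish CBC1' from Theorem~\ref{thm:CBC'}, which demands a uniform bound $T(\eps,\sigma) := C_k^\var(F_\eps, (S_\sigma F)_\eps \cap A^{\sigma,\eps}) \le c_k\eps^k$ for all $\sigma \in \Sigma(\eps)$. The strategy is a recursion that peels off one letter of $\sigma$ at a time and applies SCBC at every step.

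Write $\sigma = \sigma_1 \tau$ and split the family $\Sigma(\eps)\setminus\{\sigma\}$ according to whether the first letter differs from $\sigma_1$ (``case a'') or equals $\sigma_1$ (``case b''). By the covering identity $\bigcup_{\sigma'\in\Sigma(\eps),\,\sigma'_1 = j}(S_{\sigma'}F)_\eps = (S_j F)_\eps$, the case-a contribution is contained in $(S_{\sigma_1}F)_\eps \cap \bigcup_{j\neq\sigma_1}(S_j F)_\eps$, so SCBC and subadditivity yield
\[
C_k^\var\!\left(F_\eps,\,(S_{\sigma_1}F)_\eps \cap \bigcup_{j\neq\sigma_1}(S_j F)_\eps\right) \le (N-1)\,d_k\,\eps^k.
\]
For case b, introduce the open set $G_1 := \R^d\setminus\bigcup_{j\neq\sigma_1}(S_j F)_\eps$, on which $F_\eps$ coincides with $(S_{\sigma_1}F)_\eps$. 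Since the case-a region equals $G_1^c$ exactly, the part of the case-b region lying in $G_1^c$ is absorbed by the case-a bound, while the part lying in $G_1$ has curvature equal to that of $(S_{\sigma_1}F)_\eps$ there by locality~\eqref{eqn:loc}. Combined with the self-similarity identity $(S_{\sigma_1}F)_\eps = S_{\sigma_1}(F_{\tilde\eps_1})$ for $\tilde\eps_1 := \eps/r_{\sigma_1}\le R$ and the scaling property~\eqref{eqn:scale}, this gives the recursion
\[
T(\eps,\sigma)\ \le\ (N-1)\,d_k\,\eps^k\ +\ r_{\sigma_1}^k\,T(\tilde\eps_1,\tau).
\]

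Unrolling along $\sigma$, using $r_{\sigma|l}^k\tilde\eps_l^k=\eps^k$ at each step, produces the naive bound $T(\eps,\sigma) \le |\sigma|(N-1)\,d_k\,\eps^k$. To upgrade this to the uniform $O(\eps^k)$ estimate required by CBC1', one observes that the level-$l$ case-a term vanishes whenever the rescaled intersection $(S_{\sigma_{l+1}}F)_{\tilde\eps_l}\cap (S_jF)_{\tilde\eps_l}$ is empty for every $j\neq\sigma_{l+1}$. Setting $\delta := \tfrac12\min_{i\neq j}\dist(S_iF,S_jF)$, this happens whenever $\tilde\eps_l<\delta$, so $\tilde\eps_l$ is confined to the window $[\delta,R]$ and the number of non-trivial levels is at most a constant $M=M(F,R)$, yielding $T(\eps,\sigma)\le M(N-1)\,d_k\,\eps^k$.

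The technical heart of the proof, and the step I expect to be the main obstacle, is this final level-counting argument when $\delta = 0$, i.e., when first-level cylinders share boundary points (as happens for the Koch curve to be treated in the next section). Then the rescaled intersections are never empty and emptiness can no longer be used to truncate the sum. A finer analysis is required, presumably exploiting that SCBC controls curvature \emph{mass} on the shallow-level intersection rather than the geometric intersection itself, so that the shallow contributions, once pushed forward via \eqref{eqn:scale} and locality, structurally refine a single first-level SCBC bound rather than contributing independently. Executing this aggregation carefully is what keeps the total $O(\eps^k)$ uniformly in $\sigma$.
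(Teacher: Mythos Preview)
Your recursion $T(\eps,\sigma)\le (N-1)d_k\eps^k + r_{\sigma_1}^k T(\tilde\eps_1,\tau)$ is set up correctly, and the strong-separation case ($\delta>0$) is fine. But the gap you flag in the last paragraph is real and is not repaired by the kind of ``finer analysis'' you sketch. When $\delta=0$, the level-$l$ case-a term is a genuine, fresh application of SCBC contributing $d_k\eps^k$; after rescaling these contributions do \emph{not} collapse into a single first-level bound, because the curvature measure you are estimating at level $l$ is that of $F_{\tilde\eps_l}$, not of $F_\eps$, and there is no telescoping identity relating them. Unrolled, you really do get $|\sigma|(N-1)d_k\eps^k$, which blows up. Working directly with CBC1' there is simply no mechanism that caps the number of non-trivial levels.

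The paper avoids this by proving CBC2' instead, with $\lambda\ge\max\{1,R\rho^{-1}\}$. The point of the large $\lambda$ is that the cylinders $S_\omega F$, $\omega\in\Sigma(\lambda\eps)$, are large relative to $\eps$, so by \cite[Lemma~5.3.1]{winter} (an OSC-based volume argument) each $(S_\omega F)_\eps$ meets at most $\Gamma_{\max}$ others, uniformly in $\eps$ and $\omega$. One then decomposes $(S_\omega F)_\eps\cap B^{\omega,\eps}$ by the branching level $n$ at which $\omega'$ first differs from $\omega$, exactly as in your case-a/case-b split; the crucial observation is that each neighbour $\omega'\in\Omega$ appears at \emph{exactly one} level $n$, so the double sum over $n$ and over $\omega'$ has only $|\Omega|\le\Gamma_{\max}$ terms total, not $|\omega|$ terms. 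After subtracting cylinders with shorter common prefix, locality and scaling reduce each term to a single SCBC bound, yielding $\Gamma_{\max}d_k\eps^k$. Thus the missing ingredient is not a sharper curvature estimate but a combinatorial neighbour-count, and that count is only available after passing to the coarser family $\Sigma(\lambda\eps)$ via the equivalence of CBC with CBC2'.
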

\begin{proof} Fix some $\lambda\ge\max\{1, R\rho^{-1}\}$. We show that SCBC implies CBC2' (with the same null set $\cN$ and this choice of $\lambda$), which is equivalent to CBC by Theorem~\ref{thm:CBC'}.   

For  $\eps\in(0,R/\lambda)\setminus\cN$ and $\omega\in \Sigma(\lambda\eps)$, consider the family
$$
\Omega:=\{\omega'\in \Sigma(\lambda\eps)\setminus\{\omega\}: (S_\omega F)_\eps\cap (S_{\omega'} F)_\eps\neq\emptyset\}.
$$
By \cite[Lemma~5.3.1, p45]{winter}, the cardinality of $\Omega$ is bounded by some constant $\Gamma_{\max}$ (independent of $\eps$ and $\omega\in\Sigma(\lambda\eps)$), giving an upper bound for the number of terms in the double union below.
Write $m:=|\omega|$, $\omega=\omega_1\omega_2\ldots \omega_m$ and $\omega|n:=\omega_1\ldots\omega_n$ for $n=0,1,\ldots, m$. Observe that 
\begin{align} \label{eqn:CBC2union}
(S_\omega F)_\eps\,&\cap B^{\omega,\eps} = (S_\omega F)_\eps\,\cap \bigcup_{\omega'\in\Omega} (S_{\omega'} F)_\eps\\
&=\bigcup_{n=0}^{m-1} \bigcup_{\stackrel{\omega'\in\Omega}{\omega'|n =\omega|n, \omega'_{n+1}\neq\omega_{n+1}}} \left[(S_\omega F)_\eps \cap (S_{\omega'} F)_\eps \setminus \bigcup_{\stackrel{\sigma\in\Sigma(\lambda \eps)}{\sigma|n\neq \omega|n}} (S_\sigma F)_\eps\right]\notag.
\end{align}
Indeed, for each $\omega'\in\Omega$ there is an unique $n\in\{0,\ldots,m-1\}$ such that $\omega'|n =\omega|n$ but $\omega'_{n+1}\neq\omega_{n+1}$. Moreover, from the intersection $(S_\omega F)_\eps \cap (S_{\omega'} F)_\eps$ we can safely subtract all sets $(S_\sigma F)_\eps$ with $\sigma\in\Sigma(\lambda \eps)$ and $\sigma|n\neq \omega|n$, since either $\sigma\notin\Omega$, in which case $(S_\sigma F)_\eps$ has no intersection with $(S_\omega F)_\eps$ and thus no intersection with $(S_\omega F)_\eps \cap (S_{\omega'} F)_\eps$, or $\sigma\in\Omega$, in which case the set $(S_\sigma F)_\eps$ occurs already in the union for some smaller $n$. 

We infer that 
\begin{align} \label{eqn:CBC2sum}
&C_k^\var (F_\eps, (S_\omega F)_\eps\cap B^{\omega,\eps})\\ \notag
&\le \sum_{n=0}^{m-1} \sum_{\stackrel{\omega'\in\Omega}{\omega'|n =\omega|n, \omega'_{n+1}\neq\omega_{n+1}}} 
C_k^\var\left(F_\eps, (S_\omega F)_\eps \cap (S_{\omega'} F)_\eps \setminus \bigcup_{\stackrel{\sigma\in\Sigma(\lambda \eps)}{\sigma|n\neq \omega|n}} (S_\sigma F)_\eps\right)
\end{align}
where we keep in mind that the number of terms in this double sum is bounded by $\Gamma_{\max}$. 
Furthermore, each term in the double sum is bounded from above as follows.
For fixed $\omega'\in\Omega$ (and the corresponding $n$) write $\tilde\omega:=\omega|n=\omega'|n$. The sets $F_\eps$ and $(S_{\tilde\omega} F)_\eps$ coincide inside the open set 
$$
U:= \left(\bigcup_{\stackrel{\sigma\in\Sigma(\lambda \eps)}{\sigma|n\neq \tilde\omega}} (S_\sigma F)_\eps\right)^c.
$$
Hence, by the locality property \eqref{eqn:loc} and by the scaling properties \eqref{eqn:motion} and \eqref{eqn:scale}, we obtain
\begin{align*}
C_k^\var&\left(F_\eps, (S_\omega F)_\eps \cap (S_{\omega'} F)_\eps \cap U\right)\\
&= C_k^\var\left((S_{\tilde\omega} F)_\eps, (S_\omega F)_\eps \cap (S_{\omega'} F)_\eps \cap U\right)\\
&\le C_k^\var\left((S_{\tilde\omega} F)_\eps, (S_\omega F)_\eps \cap (S_{\omega'} F)_\eps\right)\\
&\le C_k^\var\left(S_{\tilde\omega} F_{\eps/r_{\tilde\omega}}, S_{\tilde\omega}\left((S_{\omega_{n+1}} F)_{\eps/r_{\tilde\omega}} \cap (S_{\omega'_{n+1}} F)_{\eps/r_{\tilde\omega}}\right)\right)\\
&= r_{\tilde\sigma}^k C_k^\var\left(F_{\eps/r_{\tilde\omega}}, (S_{\omega_{n+1}} F)_{\eps/r_{\tilde\omega}} \cap (S_{\omega'_{n+1}} F)_{\eps/r_{\tilde\omega}}\right).
\end{align*}
Applying now SCBC, we conclude that the last term is bounded by $d_k \eps^k$ and thus the whole expression in \eqref{eqn:CBC2sum} by
$b_k\eps^k$, where $b_k:=\Gamma_{\max} d_k$. Since this bound is valid for all $\eps\in(0,R/\lambda)\setminus\cN$ and $\omega\in \Sigma(\lambda\eps)$, the proof of the first inequality of CBC2' is complete.  

For the second inequality of CBC2', we decompose the set $F_\eps$ as follows
$$
F_\eps=\bigcup_{\sigma\in\Sigma(\eps)} (S_\sigma F)_\eps = \bigcup_{\sigma\in\Sigma(\eps)} \left(\left((S_\sigma F)_\eps\cap A^{\sigma,\eps}\right) \cup\left( (S_\sigma F)_\eps\cap (A^{\sigma,\eps})^c\right)\right).
$$  
For $\eps\in(R/\lambda, R)$, the cardinality of $\Sigma(\eps)$ is uniformly bounded by the constant $\hat c:=\#\Sigma(R/\lambda)$. Therefore, it suffices to show that there is a constant $c$ such that the curvature of each set in this union is bounded by $c \eps^k$.
For the sets $(S_\sigma F)_\eps\cap (A^{\sigma,\eps})^c$, one can use directly \eqref{eqn:loc} (in the open set $(A^{\sigma,\eps})^c$) to infer that
\begin{align*}
C_k^\var\left(F_\eps, (S_\sigma F)_\eps\cap (A^{\sigma,\eps})^c\right)&=C_k^\var\left((S_\sigma F)_\eps, (S_\sigma F)_\eps\cap (A^{\sigma,\eps})^c\right)\\
&\le C_k^\var\left((S_\sigma F)_\eps\right)\le r_\sigma^k C_k^\var\left(F_{\eps/r_\sigma}\right).
\end{align*}
Since $\sigma\in\Sigma(\eps)$ and thus $\eps/r_\sigma>R$, we conclude from \eqref{eq:R-big}, that the last expression is bounded by $c(F,R)\eps^k$ as desired. For the sets $(S_\sigma F)_\eps\cap A^{\sigma,\eps}$ a similar argument as for the sets $(S_\omega F)_\eps\,\cap B^{\omega,\eps}$ in \eqref{eqn:CBC2union} works. One has the decomposition 
$$
  (S_\sigma F)_\eps\cap A^{\sigma,\eps}=\bigcup_{n=0}^{|\sigma|-1} \bigcup_{\stackrel{\sigma'\in\Sigma(\eps)\setminus\{\sigma\}}{\sigma'|n =\sigma|n, \sigma'_{n+1}\neq\sigma_{n+1}}} \left((S_\sigma F)_\eps \cap (S_{\sigma'} F)_\eps \setminus \bigcup_{\stackrel{\tau\in\Sigma(\eps)}{\tau|n\neq \sigma|n}} (S_\tau F)_\eps\right).
$$
Again the number of sets in this double union is bounded, but for a different reason as before. Here the cardinality of $\Sigma(\eps)$ is bounded by $\hat c$ (since $\eps>R/\lambda$). The remaining arguments carry over from the case  $(S_\omega F)_\eps\,\cap B^{\omega,\eps}$ and one obtains the bound
$\hat c d_k \eps^k$ for $C_k^\var\left(F_\eps,(S_\sigma F)_\eps\cap A^{\sigma,\eps}\right)$. This completes the proof of the second inequality of CBC2'.
\end{proof}

We will now show that the converse of Theorem~\ref{thm:SCBC} is not true, i.e., that SCBC is not equivalent to CBC, by providing a counterexample. We will discuss a set which satisfies CBC but not SCBC.

\begin{ex}\label{ex:Uset} (U-set) Consider the self-similar set $F\subset\R^2$ generated by the seven similarities $S_1, \ldots, S_7$, each with ratio $r=\frac 13$, mapping the unit square $Q:=[0,1]^2$ to one of the seven subsquares forming the set $U$ as depicted in Figure~\ref{fig:Uset1}. (Note that $S_4$ includes a clockwise rotation by $\frac{\pi}2$.) This modification of the Sierpinski carpet is similar to the U-sets discussed in \cite{lw04} and \cite{winter}, but in contrast to those sets, the present set $F$ does not have polyconvex parallel sets. For instance, for large $\eps$, the intersection of $F_\eps$ with the upper half space $y\ge 1$ cannot be represented as a finite union of convex sets.

\begin{figure}
\begin{minipage}{6cm}  
  \includegraphics[width=6cm]{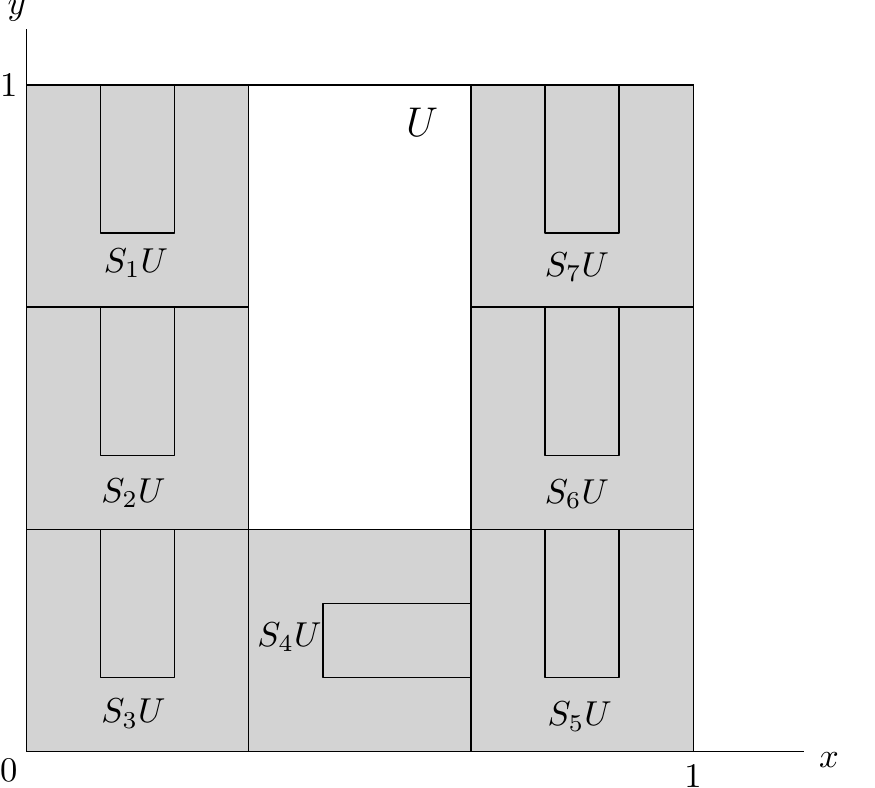}
 \end{minipage}
\begin{minipage}{65mm}
  \includegraphics[width=65mm]{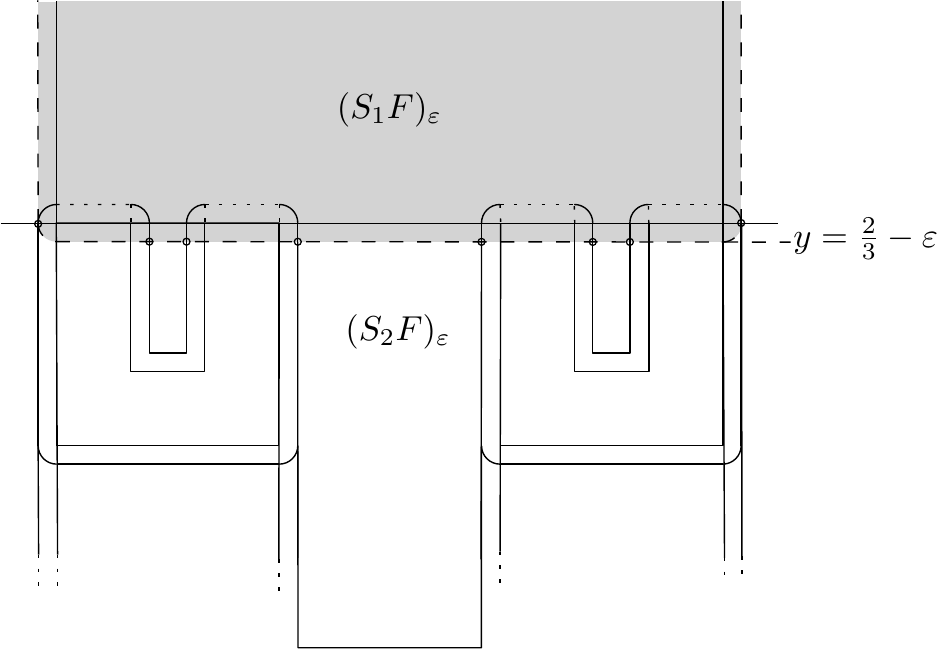}
   
\end{minipage} \label{fig:Uset1}\label{fig:Uset2}
\caption{(Left):  the iterates of the set $U$ under the IFS in Example~\ref{ex:Uset}; (Right): enlargement of the intersection $(S_1 F)_\eps\cap (S_2 F)_\eps$ for $\eps=\frac 1{45}$}
\end{figure}

First we look at the measure $C_0(F_\eps,\mydot)$ at the intersection $(S_1 F)_\eps\cap (S_2 F)_\eps$. We will show that
for $\eps\in[\frac 12 3^{-(m+2)},\frac 12 3^{-(m+1)})$ and $m=1,2,\ldots$
\begin{align} \label{eq:exSCBC}
C_0^\var(F_\eps, (S_1 F)_\eps\cap (S_2 F)_\eps)&=  \frac 12 (2^m-1)\,,  
\end{align}
which immediately implies that  $C_0^\var(F_\eps, (S_1 F)_\eps\cap (S_2 F)_\eps)\to\infty$ as $\eps\to 0$. Hence this curvature cannot be bounded by a constant on the whole interval $(0,R)$ and so SCBC does not hold.

For a proof of \eqref{eq:exSCBC}, observe that the intersection $S_1 F\cap S_2 F$ is a scaled copy $\widetilde C$ of the usual middle-third Cantor set (scaled by a factor $\frac 13$) on the line $y=\frac 23$. Moreover, for $\eps\in(0,R)$, the intersection $\partial F_\eps \cap (S_1 F)_\eps \cap (S_2 F)_\eps$ consists of a finite number of pairs of points on the line $y=\frac23-\eps$, where each pair corresponds to a complementary interval of $\widetilde C$ of length greater than $2\eps$, cf. Figure~\ref{fig:Uset2}. (There are two more intersection points with coordinates $(-\eps,\frac 23)$ and $(\frac 13+\eps,\frac 23)$, which carry no curvature.) In $\widetilde C$, we have one complementary interval of length $\frac 19$, two of length $\frac 1{3^3}$, four of length $\frac 1{3^3}$ and so on, i.e.,\ $2^k$ of length $\frac 1{3^{k+2}}$ for $k=0,1,\ldots$. Therefore, the number $J(\eps)$ of complementary intervals of $\widetilde C$ with length greater than $2\eps$ is given by 
$$
J(\eps)=\sum_{k=0}^{m-1} 2^k=2^{m}-1
$$
for $2\eps\in[3^{-(m+2)},3^{-(m+1)})$ and $m=1,2,\ldots$. Since each of the points contributes a curvature of $-\frac 14$ to $C_0(F_\eps,\mydot)$, we obtain the result claimed in \eqref{eq:exSCBC}. This completes the proof of the assertion that SCBC is not satisfied. 

It remains to show that, on the contrary, CBC is satisfied.
We demonstrate this by verifying CBC2' for $F$. For this purpose, fix $R>2$ and choose $\lambda\ge 1$ large enough to ensure that, for any $\omega,\omega' \in\Sigma(\lambda\eps)$, the intersection $(S_\omega F)_\eps\cap (S_{\omega'} F)_\eps$ is nonempty only if the intersection $S_\omega F\cap S_{\omega'} F$ is, i.e., only if the cylinder sets $S_\omega F$, $S_{\omega'} F$ are direct neighbors. (Any choice $\lambda\ge 6R$ works. Two cylinder sets $\omega,\omega'\in\Sigma(\lambda \eps)$, which do not intersect each other, have distance at least $r^{|\omega|}$ as there is a square of this side length between them. On the other hand, $\omega,\omega'\in\Sigma(\lambda \eps)$ implies $\lambda\eps\le R r^{|\omega|-1}$, i.e. $2\eps< r^{|\omega|}$.) Obviously, a cylinder set $S_\omega F$ can have at most eight neighbors (corresponding to the eight neighboring squares). In fact, it can have at most 5 neighbors, since there are always at least three neighboring squares whose interior is outside $F$ and which do not contain any cylinder set of $F$, but we will not use this. 
To verify the first inequality of CBC2', it suffices to show that there is a constant $b>0$ such that for $\eps\in(0,R/\lambda)$ and $\omega,\omega'\in\Sigma(\lambda\eps)$ with $\omega\neq \omega'$, 
\begin{align}\label{eq:U1}
C_0^\var(F_\eps, (S_\omega F)_\eps\cap (S_\omega' F)_\eps)\le b,
\end{align}
since this clearly implies that $C_0^\var(F_\eps, (S_\omega F)_\eps\cap B^{w,\eps})$ is bounded by $8b$.
So fix $\eps\in(0,R/\lambda)$ and $\omega,\omega'\in\Sigma(\lambda\eps)$ with $\omega\neq \omega'$. Then the intersection $(S_\omega F)_\eps\cap (S_{\omega'} F)_\eps$ is a scaled copy of one of the following four sets: $K_1:=(S_1 F)_\delta \cap (S_2 F)_\delta$, $K_2:=(S_3 F)_\delta\cap (S_4 F)_\delta$, $K_3:=(S_2 F)_\delta\cap (S_4 F)_\delta$ or $K_4:=(S_4 F)_\delta\cap (S_6 F)_\delta$ where $\delta:=\eps 3^{|\omega|-1}$. Moreover, the intersection of $(S_\omega F)_\eps\cap (S_{\omega'} F)_\eps$ with $\partial F_\eps$ is a scaled copy of the corresponding intersection $K_i\cap\partial F_\delta$. This implies 
$$
C_0^\var(F_\eps, (S_\omega F)_\eps\cap (S_\omega' F)_\eps)\le\max_{i\in\{1,2,3,4\}} C_0^\var(F_\delta, K_i)
$$
For $i=2,3,4$, it is easily seen that the set $\partial F_\delta\cap K_i$ consists of 2 points (for all $\delta>0$) and thus $C_0^\var(F_\delta, K_i)$ is certainly bounded by $2$. For $i=1$, we infer that $\delta\ge\frac R\lambda r$ (since $\omega,\omega'\in\Sigma(\lambda \eps)$) and thus $\delta> \frac 13 \frac 2{12}=\frac 1{18}$ by the choice of $R$ and $\lambda$. Hence $K_1$ is connected and so $\partial F_\delta\cap K_i$ consists of 2 points as in the other cases. Therefore the maximium above is clearly bounded by $2$, which completes the proof of \eqref{eq:U1} and thus of the first inequality of CBC2'.

It remains to provide a proof of the second inequality of CBC2'. With the choice $\lambda=6R$ above, it remains to show that 
$C_0^\var(F_\eps)$ is bounded by some constant for $\eps\in (\frac 16,R)$. It is easy to see that $F_\eps$ and the parallel set $Q_\eps$ of the unit square $Q=:[0,1]^2$ coincide in the open half plane $H:=\{(x,y)\in\R^2:y<1\}$. Hence, by \eqref{eqn:loc},
$$
C_0^\var(F_\eps, H)= C_0^\var(Q_\eps,H)\le C_0^\var(Q_\eps)=1,
$$
where the last equality is due to the convexity of $Q_\eps$.
It remains to show that for some $\eta>0$ and $H^\eta:=\{(x,y): y\ge 1-\eta\}$
we also have 
$$
C_0^\var(F_\eps, H^\eta)\le b
$$
for $\eps\in(\frac 16, R)$.

Fix $\eta<\frac{1}6$. Let $\Omega=\{1,7\}^2$ and $A^\eps:=\bigcup_{\omega\in\Omega} (S_\omega F)_\eps$. Observe that for $\frac 16 <\eps$, 
$$
F_\eps\cap H^\eta =A^\eps \cap H^\eta.
$$
Since the diameter of each of the cylinder sets $S_\omega F$ in $A^\eps$ is $\sqrt{2}r^2$, we can infer from \eqref{eq:R-big}, that 
$C_0^\var((S_\omega F)_\eps)$ is bounded by some constant $c=c(R')$ for all $\eps\ge R':=Rr^2$ (and all $\omega\in\Omega$). Therefore,
$$
C_0^\var(F_\eps, H^\eta) =  C_0^\var(A^\eps, H^\eta)\le C_0^\var(A^\eps)
\le \sum_{\sigma\in\Omega} C_0^\var(A^\eps, (S_\sigma F)_\eps\setminus B)+ C_0^\var(A^\eps, B),
$$
where $B:=\bigcup_{\omega,\omega'\in\Omega, \omega\neq \omega'} (S_\omega F)_\eps\cap (S_{\omega'} F)_\eps$.
Since, by \eqref{eqn:loc}, 
$$
C_0^\var(A^\eps,(S_\sigma F)_\eps\setminus B)=C_0^\var((S_\sigma F)_\eps, (S_\sigma F)_\eps\setminus B)\le C_0^\var((S_\sigma F)_\eps)\le c\,,
$$
we infer that the sum above is bounded by $4c$. For the last term observe that     
$$ 
C_0^\var(A^\eps, B)\le \sum_{\omega,\omega'\in\Omega, \omega\neq \omega'} C_0^\var(A^\eps, (S_\omega F)_\eps\cap (S_{\omega'} F)_\eps).
$$
By noting that each of the intersections $(S_\omega F)_\eps\cap (S_{\omega'} F)_\eps$ above is a convex set (or empty) and that the intersection with $\partial A^\eps$ consists of just two points (or none) each contributing at most $\frac 12$ to the curvature of $A^\eps$, we conclude that each term in the latter sum is bounded by $1$ and thus the whole sum by $6$. This completes the proof of the second inequality in CBC2'.
\end{ex}

We conclude this section with a discussion of the well-known Koch curve. As its parallel sets are clearly not polyconvex, it provides an example of a self-similar set to which the results in \cite{Za09} and \cite{wz10} apply but which is not covered by the results in \cite{winter}. It also illustrates how SCBC simplifies the verification of CBC (compare with Example~\ref{ex:Uset}).

\begin{ex} \label{ex:Koch} (Koch curve)
Let $K\subset\R^2\cong\C$ be the self-similar set generated by the two similarity mappings $S_1, S_2$ given (in complex coordinates) by $S_1(z)=c\bar{z}$ and $S_2(z)=(1-c)(\bar{z}-1)+1$, respectively, where $c=\frac 12+i\frac{\sqrt{3}}{6}$. The contraction ratios are $r_1=r_2=r=\frac 1{\sqrt{3}}$. It is well known (and easily seen) that $K$ satisfies OSC.

The critical values of the distance function are $\frac 19 r^k$, $k=0,1,2,\ldots$. In particular, these values form a null set so that RC is satisfied. (More precisely, all critical points lie either on the axis $\Re(z)=\frac12$ or on one of its iterates $S_\omega(\{\Re(z)=\frac12\})$, $\omega\in\Sigma^*$. For $\eps=\frac 1 9$, for instance, $p=\frac 12+ i \frac{\sqrt{3}}{18}$ is the unique critical point with this distance from $K$ (cf.\ Figure~\ref{fig:koch3}). 
Note that also for the critical values $\eps$ the curvature measure $C_0(K_\eps,\mydot)$ is well defined in this case.)

Now we want to look more closely at the curvature bound condition for $k=0$. We will verify that SCBC holds, which implies CBC by Theorem~\ref{thm:SCBC}. Hence instead of having to work with cylinder sets of all levels, it is enough to look at the first level cylinder sets, of which there are only two in this case. It suffices to show that, for all $\eps>(0,R)$,
the expression
$
C_0^\var(K_\eps, (S_1 K)_\eps\cap (S_2 K)_\eps)
$
is bounded by some constant $d_0$. Since the measure $C_0(K_\eps,\mydot)$ is concentrated on the boundary of $K_\eps$, it is enough to consider the intersection  $\partial(S_1 K)_\eps\cap \partial(S_2 K)_\eps$, which consists of some arc $A=A(\eps)$ of the circle of radius $\eps$ centered at the intersection point of $S_1K$ and $S_2K$ and a single point $p=p(\eps)$, the intersection point of the two curves bounding the parallel sets $(S_1K)_\eps$ and $(S_2K)_\eps$ from below, compare Figure~\ref{fig:koch3}. (In fact, it requires some justification to see that those two curves intersect in a single point for each fixed $\eps>0$. We skip the details of the rather elementary computations at this point.) Now observe that $C_0(K_\eps, A)=\frac{\alpha}{2\pi}$ where $\alpha=\frac \pi 3$ is the angle determining the arc $A$ (independent of $\eps$). Hence, $C_0(K_\eps, A)=\frac{1}{6}$ for all $\eps>0$.
The measure $C_0(K_\eps, \{p\})$ depends on $\eps$ and is negative. It is certainly bounded from below by $-1$. (In fact, it is bounded by $-\frac12$.) Hence, we get
that, for all $\eps>0$,
$$
C_0^\var(K_\eps, (S_1 K)_\eps\cap (S_2 K)_\eps)\le C_0^\var(K_\eps, A\cup \{p\})\le \frac 16 + 1 =: d_0,
$$
which verifies SCBC and thus CBC.

Since $K$ is lattice, Theorem~\ref{thm:global} implies the existence of the average limit $C_0^f(K)$, as given by \eqref{eq:global:lim}, but not the existence of the essential limit in \eqref{eq:global:esslim}. Moreover, by Theorem~\ref{thm:local}, the corresponding fractal curvature measure $C_0^f(K,\mydot)$ exists and is given by $C_0^f(K) \mu_K$, where $\mu_K=\frac{\Ha^D\lfloor K(\mydot)}{\Ha^D(K)}$ is the normalized $D$-dimensional Hausdorff measure on $K$ with $D=\log_3 4$.

\begin{figure}
  \centering
  \scalebox{0.6}{\includegraphics{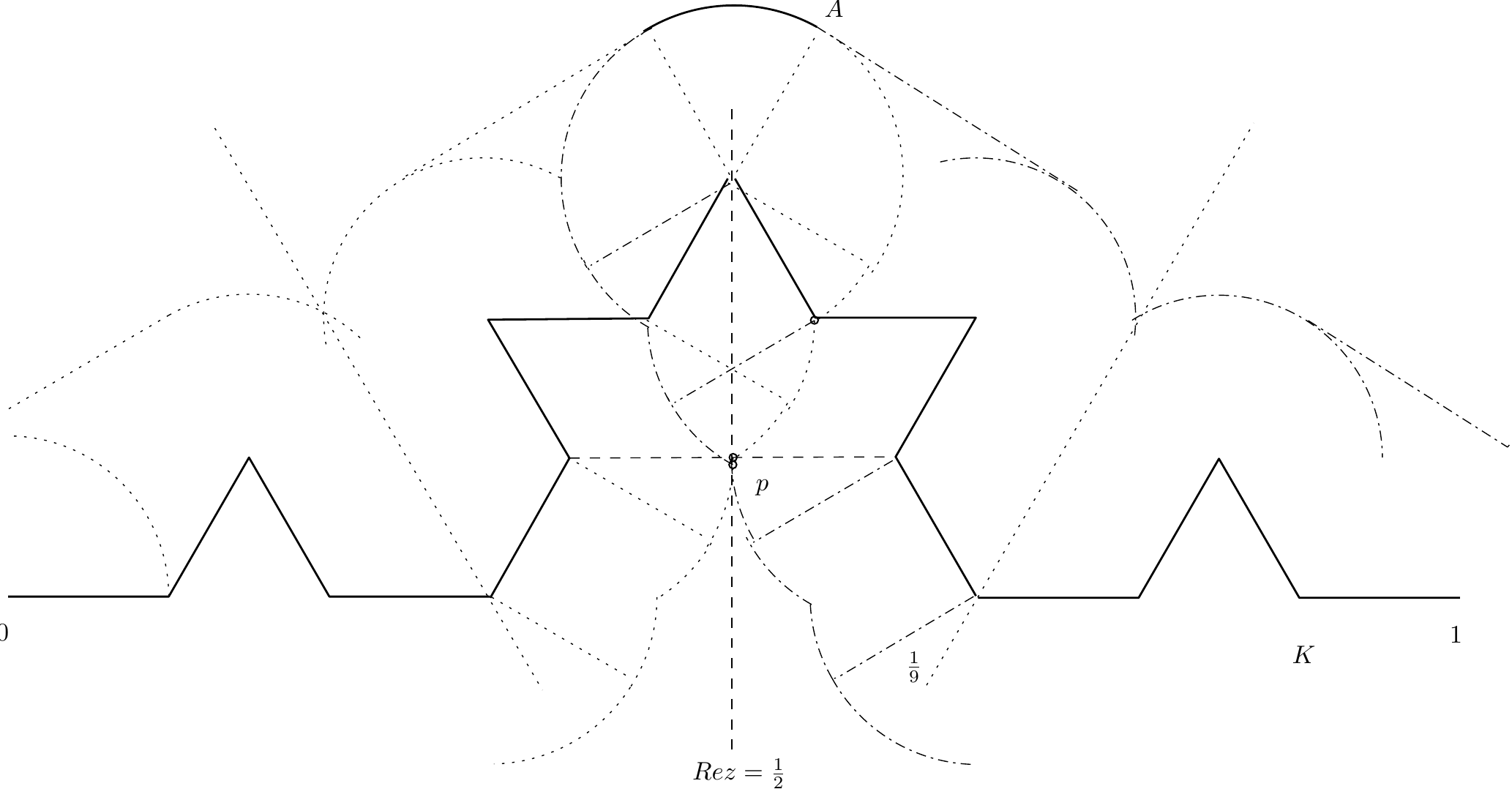}}
  \caption{Approximation of the Koch curve and its $\eps$-parallel set for the critical value $\eps=\frac 19$. $p$ is a critical point realizing this value. The intersection  $\partial(S_1 K)_\eps\cap \partial(S_2 K)_\eps$ consists of the arc $A$ and the point $p$.}
  \label{fig:koch3}
\end{figure}
\end{ex}

\end{document}